\documentclass[12pt]{article}

\setlength{\oddsidemargin}{.25cm}
\setlength{\evensidemargin}{.25cm}
\setlength{\textwidth}{6.35in}
\setlength{\textheight}{8.2in}

\usepackage{amsthm}
\usepackage{amssymb, latexsym, amsmath, amscd, amsfonts}
\usepackage{array, graphicx, lmodern, slantsc}
\usepackage{enumerate}
\usepackage[english]{babel}        
\usepackage[all]{xy}
\CompileMatrices

\usepackage{color}

\def\TC{\protect\operatorname{TC}}
\def\F{\protect\operatorname{Conf}}
\def\cat{\protect\operatorname{cat}}
\def\zcl{\protect\operatorname{zcl}}

\newtheorem{proposition}{Proposition}[section]
\newtheorem{corollary}[proposition]{Corollary}

\newtheorem{theo}[proposition]{Theorem}

\newtheorem{ejem}[proposition]{Example}
\newtheorem{lema}[proposition]{Lemma}

\makeindex

\title{Topological complexity of collision-free multi-tasking motion planning on orientable surfaces}
\author{Jes\'us Gonz\'alez\thanks{Partially supported by Conacyt Research Grant 221221.} \;\,and\; B\'arbara Guti\'errez}
\date{\empty}

\begin{document}

\maketitle

\begin{abstract}
We compute the higher topological complexity of ordered configuration spaces of orientable surfaces, thus extending Cohen-Farber's description of the ordinary topological complexity of those spaces.
\end{abstract}

{\small 2010 Mathematics Subject Classification: 55M30, 55R80, 55T99, 68T40, 70B15.}

{\small Keywords and phrases: Motion planning, sectional category, configuration space, orientable surface, Totaro spectral sequence.}

\tableofcontents

\section{Introduction}

The configuration space of $n$ distinct ordered points of a space $X$, $\F(X, n)$, is the subspace of the $n$-fold cartesian power $X^{\times n}$ given by
$$\F(X, n)=\left\{\rule{0mm}{4mm}(x_1, \ldots, x_n) \in X^{\times n} \;\colon\, x_i \neq x_j \mbox{ whenever } i \neq j\right\}.$$
These spaces play a central role in a number of settings in mathematics, as well as in other areas of science. Our interest lies in topological robotics, where $\F(X,n)$ arises as the model for the state space of a system consisting of $n$ distinct particles moving without collisions on $X$. We focus on the case $X=\Sigma_g$, an orientable surface of genus $g$. Farber's topological complexity ($\TC$) of $\F(\Sigma_g,n)$ has been described in~\cite{CoFa11}. The purpose of this work is to extend Cohen-Farber's results by describing (in Theorem~\ref{main} below) the higher topological complexity of $\F(\Sigma_g,n)$.

\medskip
In preparation for the bulk of the paper, we now recall Farber's notion of topological complexity, together with Rudyak's generalization, the so-called higher topological complexity.

\medskip
The concept of topological complexity (TC) of a space $X$ was introduced early this millennium by Michael Farber as a way to utilize techniques from homotopy theory in order to model and study, from a topological perspective, the motion planning problem in robotics. If $P(X)$ stands for the space of free paths in $X$, then $\TC(X)$ is the reduced Schwarz genus (also known as sectional category) of the fibration $e\colon P(X)\to X\times X$ given by $e(\gamma)=(\gamma(0),\gamma(1))$. We refer the reader to the book~\cite{MR2455573} and the references therein for a discussion of the meaning, relevance, and basic properties of Farber's concept. 

\medskip
The idea was generalized a few years later by Rudyak, who defined in~\cite{Ru10} the $s$-th topological complexity of $X$, $\TC_s(X)$, as the reduced Schwarz genus of the $s$-th fold evaluation map $e_s\colon P(X)\to X^{\times s}$ given by $$e_s(\gamma)=\left(\gamma(0),\gamma\left(\frac{1}{s-1}\right),\gamma\left(\frac{2}{s-1}\right),\ldots,\gamma\left(\frac{s-2}{s-1}\right),\gamma(1)\right).$$
In particular $\TC=\TC_2$. Rudyak's higher topological complexity has been studied systematically in~\cite{bgrt}. The term ``higher'' comes by considering the base space $X^s$ of $e_s$ as the space of sequences of prescribed stages in the motion planning of a robot with state space $X$, while Farber's original definition (with $s = 2$) deals only with the space $X\times X$ of initial-final stages of the robot.

\medskip
We now state our main result.
\begin{theo}\label{main}
The $s$-th topological complexity of $\F(\Sigma_g,n)$ is given by
$$
\TC_s(\F(\Sigma_g,n))=\begin{cases} 
\,s, & \mbox{if $\,g=0$ and $n\leq2;$}\\
\,sn-3, & \mbox{if $\,g=0$ and $n\geq3;$}\\
\,s(n+1)-2, & \mbox{if $\,g=1$ and $n\geq1;$}\\
\,s(n+1), & \mbox{if $\,g\geq2$ and $n\geq1.$}
\end{cases}
$$
\end{theo}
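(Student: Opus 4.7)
The plan is to establish Theorem~\ref{main} by matching upper and lower bounds on $\TC_s(\F(\Sigma_g,n))$ in each of the four regimes, relying on the standard inequalities $\zcl_s(X)\leq \TC_s(X)\leq s\cdot\cat(X)$, subadditivity $\TC_s(X\times Y)\leq\TC_s(X)+\TC_s(Y)$, the topological-group identity $\TC_s(G)=\cat(G^{s-1})$, and Totaro's spectral sequence as the computational engine for $H^*(\F(\Sigma_g,n))$.

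For the upper bounds, the case $g=0$, $n\leq2$ reduces to $\F(S^2,n)\simeq S^2$ and the classical value $\TC_s(S^2)=s$. When $g=0$ and $n\geq 3$, the free $PSL_2(\mathbb{C})$-action on triples of distinct points of $S^2$ yields a product decomposition $\F(S^2,n)\simeq PSL_2(\mathbb{C})\times\F(\mathbb{C}\setminus\{0,1\},n-3)$; subadditivity combined with the identity $\TC_s(PSL_2(\mathbb{C}))=3s-3$ (via $PSL_2(\mathbb{C})\simeq SO(3)$ and $\cat(SO(3)^{s-1})=3(s-1)$) and the $s\cdot\cat$-bound on the second factor (whose homotopy dimension is $n-3$ by the Fadell--Neuwirth fibration) delivers $sn-3$. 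For $g=1$, the diagonal translation action of $T^2$ gives $\F(T^2,n)\simeq T^2\times\F(T^2\setminus\{pt\},n-1)$; subadditivity with $\TC_s(T^2)=2s-2$ and the $s\cdot\cat$-bound on the second factor (homotopy dimension $n-1$) yields $s(n+1)-2$. When $g\geq2$ no such group action exists, and one instead iterates the Fadell--Neuwirth fibration to see that $\F(\Sigma_g,n)$ has the homotopy type of a CW-complex of dimension $n+1$ (each fiber $\Sigma_g\setminus\{k\,\text{pts}\}$, $k\geq1$, has the homotopy type of a wedge of circles), giving $\cat(\F(\Sigma_g,n))\leq n+1$ and hence $\TC_s\leq s(n+1)$.

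For the lower bounds I would use Totaro's spectral sequence to identify explicit generators of $H^*(\F(\Sigma_g,n))$: the pullbacks $p_i^{\ast}\alpha$ of classes $\alpha\in H^*(\Sigma_g)$ along the coordinate projections, together with the Totaro ``collision'' classes $G_{i,j}$ supported on the partial diagonals $\{x_i=x_j\}$. One then assembles, inside $H^*(\F(\Sigma_g,n)^s)$, a long product of zero-divisors of the form $p_i^{\ast}\alpha\otimes 1\otimes\cdots\otimes 1-1\otimes\cdots\otimes 1\otimes p_i^{\ast}\alpha\otimes 1\otimes\cdots$ (and analogous combinations in the $G_{i,j}$'s), whose total length matches the target value of $\TC_s$; its non-vanishing is checked by tracking the multiplicative structure on the $E_\infty$-page of the spectral sequence for $\F(\Sigma_g,n)^s$.

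The main obstacle is this last verification, i.e., certifying that the asserted long cup product survives both the Totaro relations and the higher differentials. A secondary subtlety is to explain the deficits ($-3$ for $g=0,\,n\geq3$ and $-2$ for $g=1$); these precisely reflect the identities $\TC_s(PSL_2(\mathbb{C}))=s\cdot\cat(PSL_2(\mathbb{C}))-3$ and $\TC_s(T^2)=s\cdot\cat(T^2)-2$, so ensuring the matching lower bound requires separately establishing tight cup-length calculations on the ``non-group'' factors $\F(\mathbb{C}\setminus\{0,1\},n-3)$ and $\F(T^2\setminus\{pt\},n-1)$ in each product decomposition, for which Totaro's spectral sequence again provides the main input.
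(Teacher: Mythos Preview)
Your upper-bound strategy is exactly the paper's: the product decompositions for $g=0$ and $g=1$, the CW-dimension bound for $g\geq2$, and the subadditivity/group identities you quote are precisely what is used in Section~\ref{sectupperbounds}.

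For the lower bounds, however, your plan diverges from the paper in two ways. First, the paper never touches the Totaro collision classes $G_{i,j}$ (the ones living off the base axis of the spectral sequence). Instead it works entirely inside the subalgebra $E(g)_\infty^{*,0}=H^*(\Sigma_g^{\times n})/D_g\subseteq H^*(\F(\Sigma_g,n))$ sitting on the base axis; the only classes used are pullbacks $a_i(p),b_i(p),\omega_i$ of surface classes along the projections. This avoids any discussion of higher differentials or extension problems: one only has to check non-vanishing in an explicit quotient of a polynomial ring. Second, for $g\geq1$ the paper does \emph{not} separate the problem into factors as you propose (tight cup-length on $\F(T^2\setminus\{\mathrm{pt}\},n-1)$, etc.); rather it exhibits a single long product of $s$-th zero-divisors of $E(g)_\infty^{*,0}$---built from the classes $x_i=a_i(1)-a_1(1)$, $y_i=b_i(1)-b_1(1)$, and (for $g\geq2$) $a_1(2),b_1(2)$---and shows it is nonzero (Theorem~\ref{maintwo}). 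Only the case $g=0$ is handled via the product splitting, and there with $\mathbb{Z}_2$ coefficients, citing known values for $\zcl_{s,\mathbb{Z}_2}(\mathbb{R}\mathrm{P}^3)$ and $\zcl_{s,\mathbb{Z}_2}(\F(\mathbb{R}^2-Q_2,n-3))$.

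What you correctly identify as ``the main obstacle'' is in fact the entire substance of the paper for $g\geq1$. The non-vanishing of the explicit product is not a routine spectral-sequence check: the paper passes to a further hand-crafted subquotient $B_g$ of $E(g)_\infty^{*,0}$ (Sections~4--5), pins down an additive basis for an intermediate quotient $A_g$ via a change of generators (Corollary~\ref{baseprimamodificada}), and then proves a linear-independence statement (Theorem~\ref{indlin}) by a case analysis of how the defining ideal interacts with that basis. Without carrying out a comparable computation, your proposal remains a plan rather than a proof; and if you do intend to use the $G_{i,j}$'s, be aware that controlling their products and their interaction with the Totaro differentials is likely harder, not easier, than the base-axis route the paper takes.
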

The case $n=1$ in Theorem~\ref{main} has been noted in previous works; see~\cite[Corollary~3.12]{bgrt} for the case $g\leq1$, \cite[Example~16.4]{trmpfarber} for case $g\geq2$ with $s=2$, and~\cite[Proposition~5.1]{gggl} for the case $g\geq2$ with $s\geq3$. This also covers the case $g=0$ with $n=2$ since $\F(S^2,2)$ has the homotopy type of $S^2$. Indeed, by the Gram-Schmidt process, $S^2$ sits inside $F(S^2, 2)$ via the map $x \mapsto (x, -x)$ as a strong deformation retract. Therefore, in what follows we restrict ourselves to the case $n\geq2$, and in fact $n\geq3$ if $g=0$.

\section{Upper bounds}\label{sectupperbounds}
{\bf Genus 0.} For $n\geq3$ the ordered configuration space of $n$ distinct points on the 2-dimensional sphere $S^2$ admits a homotopy decomposition
\begin{equation}\label{eqFS^2}
\F(S^2, n) \simeq \mbox{SO}(3) \times \F(\mathbb{R}^2 - Q_2, n-3)
\end{equation} 
where $Q_2$ is a set of two fixed points on $\mathbb{R}^2$ (see \cite[Theorem 3.1]{CoFa11}, for instance). The higher topological complexity of both factors is known: The topological group $\mbox{SO}(3) \simeq \mathbb{R}\mbox{P}^3$ has
\begin{equation}\label{sharp1}
\TC_s(\mathrm{SO}(3)))= \cat((\mathbb{R}\mbox{P}^3)^{s-1}) = (s-1)\cat(\mathbb{R}\mbox{P}^3)=3(s-1)
\end{equation}
in view of~\cite{LuptonSche13}, whereas~\cite[Theorem 1.3]{JeMa14} gives
\begin{equation}\label{sharp2}
\TC_s(\F(\mathbb{R}^2 - Q_2, n-3))=s(n-3).
\end{equation}
Then~\cite[Theorem 1.3]{JeMa14} gives $\TC_s(\F(S^2,n))\leq sn-3$.

\medskip\noindent{\bf Genus 1.} Since $T=S^1 \times S^1$ is a group, there is a topological decomposition $$\F(T, n) \cong T \times \F(T - Q_1, n-1)$$ where $Q_1$ is a fixed point in $T$. It has been noted that $$\TC_s(T)=2(s-1).$$ On the other hand, $\F(T - Q_1, n-1)$ has the homotopy type of a cell complex of dimension $n-1$ (see~\cite[proof of Theorem~4.1]{CoFa11}). So~\cite[Theorem~3.9]{bgrt} gives $$\TC_s(\F(T - Q_1, n-1))\leq s(n-1),$$
and we get $\TC_s(\F(T,n))\leq s(n+1)-2$.

\medskip\noindent{\bf Genus at least 2.} As noted in the proof of~\cite[Theorem~5.1]{CoFa11}, $\F(\Sigma_g,n)$ has the homotopy type of a cell complex of dimension $n+1$. We thus immediately obtain $\TC_s(\F(\Sigma_g,n))\leq s(n+1)$.

\medskip
The central goal of the paper is to show, by homological methods, that the three upper bounds described in this section are in fact equalities.

\section{Zero divisors via the Totaro spectral sequence}
For a (graded-)commutative unital algebra $A$ over a field $\mathbb{F}$, let $\mu_s\colon A^{\otimes s}\to A$ denote the iterated multiplication map determined by $\mu_s(a_1\otimes \cdots \otimes a_s)=a_1\cdots a_s$. (All tensor products are taken over $\mathbb{F}$.) Elements in the kernel of $\mu_s$ are called $s$-th zero divisors of $A$, and the $s$-th zero-divisors cup-length of $A$, denoted by $\zcl_{s}(A)$, is the maximal number of $s$-th zero-divisors of $A$ having a non-trivial product. Then, for a field $\mathbb{F}$, the $s$-th zero-divisors cup-length of a space $X$ is $\zcl_{s,\mathbb{F}}(X):=\zcl_s(H^*(X;\mathbb{F}))$. In other words, $\zcl_{s,\mathbb{F}}(X)$ is the maximal number of elements in the kernel of the morphism $\Delta_s^*\colon H^*(X^{\times s};R)\to H^*(X;R)$ having a non-trivial product, where $\Delta_s\colon X \to X^{\times s}$ stands for the iterated diagonal.

\begin{proposition}[{\cite[Proposition~3.4]{Ru10}}]\label{cotasseq}
For any field $\mathbb{F}$, $\TC_s(X)\geq \zcl_{s,\mathbb{F}}(X)$.
\end{proposition}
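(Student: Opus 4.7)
The plan is to derive the inequality from the classical cohomological lower bound for sectional category applied to the fibration $e_{s}$ defining $\TC_{s}$. First I would invoke the general fact: for any fibration $p\colon E\to B$ and any field $\mathbb{F}$, the reduced sectional category of $p$ is at least the cup-length of the ideal $\ker p^{*}\subseteq H^{*}(B;\mathbb{F})$, i.e.\ the largest integer $k$ for which one can find classes $z_{1},\ldots,z_{k}\in\ker p^{*}$ whose cup product in $H^{*}(B;\mathbb{F})$ is nonzero.

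I would then recall the standard one-line justification of this bound, since it is the only nontrivial ingredient. Suppose $B$ admits an open cover $U_{0},\ldots,U_{k}$ on each of which $p$ has a local section $\sigma_{i}\colon U_{i}\to E$. For every $z\in\ker p^{*}$ the restriction $z|_{U_{i}}=\sigma_{i}^{*}p^{*}(z)$ vanishes, so $z$ lifts to a relative class $\tilde{z}_{i}\in H^{*}(B,U_{i};\mathbb{F})$ via the long exact sequence of the pair. Since cup products in relative cohomology land in $H^{*}(B,U_{i_{0}}\cup\cdots\cup U_{i_{k}};\mathbb{F})$ and the $U_{i}$ cover $B$, any product of $k+1$ such lifts lives in $H^{*}(B,B;\mathbb{F})=0$, forcing the original product in $H^{*}(B;\mathbb{F})$ to vanish.

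Next I would specialize to $p=e_{s}\colon P(X)\to X^{\times s}$. The constant-path inclusion $c\colon X\hookrightarrow P(X)$, sending $x$ to the constant path at $x$, is a homotopy equivalence satisfying $e_{s}\circ c=\Delta_{s}$. Under the induced isomorphism $c^{*}\colon H^{*}(P(X);\mathbb{F})\xrightarrow{\cong}H^{*}(X;\mathbb{F})$ the map $e_{s}^{*}$ is identified with $\Delta_{s}^{*}$, whence $\ker e_{s}^{*}=\ker\Delta_{s}^{*}$. By the definition recalled just before the proposition, the cup-length of this kernel is precisely $\zcl_{s,\mathbb{F}}(X)$. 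Combining with the preceding step yields $\TC_{s}(X)=\mathrm{secat}(e_{s})\geq\zcl_{s,\mathbb{F}}(X)$.

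The argument is essentially formal and the only thing to double-check is bookkeeping of the reduction conventions: both $\TC_{s}$ and $\zcl_{s,\mathbb{F}}$ are one less than their unreduced counterparts (number of open sets in a Schwarz cover on the one hand, maximal number of factors in a nontrivial product on the other), so the inequality comes out exactly as stated with no off-by-one correction needed. The only real ingredient is the relative-cohomology vanishing argument above, which is the recognized obstacle in the abstract proof but is entirely standard.
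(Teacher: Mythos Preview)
Your argument is correct and is the standard proof of this cohomological lower bound for higher topological complexity. Note, however, that the paper does not give its own proof of this proposition: it is simply quoted from \cite[Proposition~3.4]{Ru10}, so there is nothing to compare against beyond observing that your write-up supplies exactly the classical sectional-category obstruction argument that underlies the cited result.
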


We use Proposition~\ref{cotasseq} to show that each of the upper bounds described in the previous section are sharp. The simplest situation, i.e.~that for $S^2$, is based on the obvious observation that, for algebras $A'$ and $A''$ as above, $A:=A'\otimes A''$ is a (graded-)commutative unital algebra with multiplication $$(a'_1\otimes a''_1)(a'_2\otimes a''_2):=(-1)^{\deg(a''_1)\deg(a'_2)}a'_1a'_2\otimes a''_1a''_2$$ and, in these conditions, $$\zcl_s(A)\geq\zcl_s(A')+\zcl_s(A'').$$ For instance,~(\ref{eqFS^2}) yields
\begin{equation}\label{conclusiong0}
\zcl_{s,\mathbb{F}}(\F(S^2, n))\geq \zcl_{s,\mathbb{F}}(\mathbb{R}\mathrm{P}^3) + \zcl_{s,\mathbb{F}}(\F(\mathbb{R}^2 - Q_2, n-3)).
\end{equation}

\begin{proof}[Proof of Theorem~\ref{main} for $g=0$ and $n\geq3$]
In view of the proof of \cite[Theorem~5.1]{JeMa14}, the assertion in~(\ref{sharp2}) can be strengthened to
$$ \TC_s(\F(\mathbb{R}^2-Q_2,n-3))=s(n-3)=\zcl_{s,\mathbb{Z}_2}(\F(\mathbb{R}^2-Q_2,n-3)),$$ whereas the corresponding equality $$\TC_s(\mathbb{R}\mathrm{P}^3)=3(s-1)=\zcl_{s,\mathbb{Z}_2}(\mathbb{R}\mathrm{P}^3),$$ extending~(\ref{sharp1}), is an easy exercise. Together with~(\ref{conclusiong0}) and Proposition~\ref{cotasseq} we then get
$$
\TC_s(\F(S^2,n))\geq sn-3,
$$
which completes the proof in view of the upper bound given in Section~\ref{sectupperbounds} for $g=0$ and $n\geq3$.
\end{proof}

Proving that the upper bounds in Section~\ref{sectupperbounds} are also optimal for $\Sigma_g$ with $g\geq1$ (and, thus, completing the proof of Theorem~\ref{main}) depends on Lemma~\ref{cotasseq} and a rather explicit calculation to estimate $\zcl_{s,\mathbb{Q}}(\F(\Sigma_g,n))$. We will show
\begin{equation}\label{suficiente-mente}
\zcl_{s,\mathbb{Q}}(\F(\Sigma_g,n))\geq
\begin{cases}
s(n+1)-2,&g=1;\\s(n+1), & g\geq2.
\end{cases}
\end{equation}
As suggested in~(\ref{suficiente-mente}), all cohomology rings  in the remainder of the paper will have rational coefficients.

\medskip
The Leray spectral sequence of the inclusion $\F(M,n)\hookrightarrow M^{\times n}$ is a central tool for computing the rational cohomology ring of the ordered configuration space $\F(M,n)$ when $M$ is an orientable manifold. As shown by Cohen-Taylor~(\cite{MR513543}) and Totaro~(\cite{MR1404924}), the spectral sequence is particularly amenable when $M$ is a complex projective manifold (e.g.~$M=\Sigma_g$). We do not need the whole spectral sequence $\{E(g)_i^{*,*}\}_{i\geq2}$ for $M=\Sigma_g$, only the subalgebra $E(g)_\infty^{*,0}$ of $H^*(\F(\Sigma_g,n))$ detected on the base axis of the spectral sequence, which is described next.

\medskip
Recall that the rational cohomology algebra $H^*(\Sigma_g)$ is the polynomial ring on $2g$ generators $a(p),b(p)\in H^1(\Sigma_g)$ with $1\leq p\leq g$, and an additional generator $\omega\in H^2(\Sigma_g)$ subject to the relations
$$
a(p) a(q) = b(p) b(q) = 0,
\;\quad\mbox{ and }\;\quad
a(p) b(q) =
\begin{cases}\omega, & p=q; \\ 0, & p\neq q,\end{cases}
$$
for any $p,q\in\{1,\ldots, g\}$. Consequently, $H^*(\Sigma_g^{\times n})$ is generated by 1-dimensional classes $a_i(p)$ and $b_i(p)$ ($1\leq i\leq n$ and $1\leq p\leq g$) and by 2-dimensional classes $\omega_i$ ($1\leq i\leq n$), where the subindex $i$ indicates the cartesian factor where the classes come from, subject to the relations
\begin{equation}\label{relsporeje}
a_i(p) a_i(q) = b_i(p) b_i(q) = 0 \quad\mbox{ and }\quad a_i(p) b_i(q) =
\begin{cases}\omega_i, & p=q; \\ 0, & p\neq q,\end{cases}
\end{equation}
for $p,q\in\{1,\ldots,g\}$ and $i\in\{1,\ldots,n\}$. In particular, an additive basis for $H^*(\Sigma_g^{\times n})$ is given by the set $\beta_1$ consisting of the (tensor) products $\mathbf{u}=u_1\cdots u_n$ satisfying
\begin{equation}\label{condbeta1}
u_i\in\{1,a_i(p), b_i(p), \omega_i\colon 1\leq p\leq g\},\mbox{ \ for each $i\in\{1,\ldots,n\}$}.
\end{equation}
Let $D_g$ be the ideal of $H^*(\Sigma_g^{\times n})$ generated by the elements
\begin{equation}\label{relstotaro}
\omega_i+\omega_j+\sum_{p=1}^g\left(b_i(p) a_j(p)-a_i(p) b_j(p)\right)
\end{equation}
for $1\leq i<j\leq n$. In the spectral sequence, $H^*(\Sigma_g^{\times n})$ corresponds to the base $E_2^{*,0}$, and $D_g$ corresponds to the image of the only differentials landing on the base. Therefore:

\begin{lema}[{\cite[Theorem~4 ]{MR1404924}}]\label{totarosubalgebra}
The quotient $E(g)_\infty^{*,0}=H^*(\Sigma_g^{\times n})/D_g$ is a subalgebra of $H^*(\F(\Sigma_g,n))$.
\end{lema}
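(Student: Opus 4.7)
The plan is to extract the behavior on the base axis of the Cohen--Taylor/Totaro spectral sequence for the inclusion $j\colon\F(\Sigma_g,n)\hookrightarrow \Sigma_g^{\times n}$, and then invoke a standard edge-homomorphism argument to realize the resulting quotient as a subalgebra of $H^*(\F(\Sigma_g,n))$. First I would set up the $E_2$-page following Cohen--Taylor: one has $E(g)_2^{*,0}=H^*(\Sigma_g^{\times n})$, while the rows $q>0$ are generated, as a module over this base, by Gysin/Thom classes $G_{ij}$ of bidegree $(0,1)$ coming from the pairwise diagonals $\Delta_{ij}\subset \Sigma_g^{\times n}$ (for $1\leq i<j\leq n$).

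Next I would argue that the only potentially nonzero differential is $d_2\colon E(g)_2^{*,1}\to E(g)_2^{*+2,0}$, so that the spectral sequence collapses at $E_3$. Because $\Sigma_g$ is a smooth complex projective variety of complex dimension $1$, this is Totaro's Hodge-theoretic collapse input. A direct K\"unneth computation with the Poincar\'e-dual basis $\{1,a(p),b(p),\omega\}$ of $H^*(\Sigma_g)$ identifies the class of the diagonal of $\Sigma_g\times\Sigma_g$ as
\[
1\otimes\omega+\omega\otimes 1+\sum_{p=1}^{g}\bigl(b(p)\otimes a(p)-a(p)\otimes b(p)\bigr),
\]
and pulling back along the $(i,j)$-coordinate projection $\Sigma_g^{\times n}\to\Sigma_g\times\Sigma_g$ recovers exactly the generators listed in~(\ref{relstotaro}). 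Thus $d_2(G_{ij})$ is the $(i,j)$-th Totaro relation, and since the $G_{ij}$ generate $E(g)_2^{*,1}$ over the base we obtain $E(g)_\infty^{*,0}=H^*(\Sigma_g^{\times n})/D_g$.

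Finally, the edge homomorphism induced by $j$ factors as $H^*(\Sigma_g^{\times n})=E(g)_2^{*,0}\twoheadrightarrow E(g)_\infty^{*,0}\hookrightarrow H^*(\F(\Sigma_g,n))$, where the second arrow identifies $E(g)_\infty^{*,0}$ with the deepest piece $F^pH^p$ of the multiplicative Leray filtration in each degree $p$. Because the filtration is multiplicative, this deepest piece is closed under cup products, which realizes $E(g)_\infty^{*,0}$ as a subalgebra of $H^*(\F(\Sigma_g,n))$, as claimed. The main obstacle is the Hodge-theoretic collapse statement for Totaro's spectral sequence, which is the real technical content of~\cite{MR1404924} and which I would invoke as a black box rather than reprove here; the identification of the image of $d_2$ with $D_g$ is then just the K\"unneth bookkeeping above.
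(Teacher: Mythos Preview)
The paper gives no proof of this lemma at all: it is simply quoted, with attribution to Totaro~\cite{MR1404924}, after the sentence ``Therefore:'' summarizing the spectral-sequence setup. Your sketch is therefore strictly more than what the paper does, and it is the correct outline of the underlying argument: identify $E_2^{*,0}$ with $H^*(\Sigma_g^{\times n})$, compute $d_2$ on the diagonal Gysin classes $G_{ij}$ via the K\"unneth expression for the class of the diagonal (matching the generators in~(\ref{relstotaro})), invoke Totaro's Hodge-theoretic collapse so that $E_\infty^{*,0}=E_3^{*,0}=H^*(\Sigma_g^{\times n})/D_g$, and finish with the multiplicative edge homomorphism. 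You correctly flag the collapse as the nontrivial black box; everything else is routine bookkeeping, and the edge/filtration argument for ``subalgebra'' is exactly right since $E_\infty^{p,0}=F^pH^p$ is closed under products.
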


In particular,~(\ref{suficiente-mente}) will follow once we prove
\begin{equation}\label{suficiente-mente-mente}
\zcl_s(E(g)_\infty^{*,0})\geq
\begin{cases}
s(n+1)-2,&g=1;\\s(n+1), & g\geq2.
\end{cases}
\end{equation}
Actually, a more explicit statement (in terms of a suitably large non-trivial product of $s$-th zero-divisors of $E(g)_\infty^{*,0}$) is given in Theorem~\ref{maintwo} below, which requires some preparatory notation.

\medskip
For $1\leq i\leq n$ and $1\leq p\leq g$, consider the elements $x_i(p),y_i(p)\in E(g)_\infty^{*,0}$ defined by
\begin{itemize}
\item $x_i(p)=a_i(p)$ \ and \ $y_i(p)=b_i(p)$, \ if $p\geq2$, or if $p=1$ with $i=1$;
\item $x_i(1)=a_i(1)-x_1(1)$ \ and \ $y_i(1)=b_i(1)-y_1(1)$, if $i\geq2$.
\end{itemize}
In order to simplify notation, it will be convenient to write $x_i$ and $y_i$ as alternatives for $x_i(1)$ and $y_i(1)$, respectively. Likewise, $a_i$ and $b_i$ will be used as substitutes of $a_i(1)$ and $b_i(1)$, respectively.

\begin{ejem}{\em
The relations~(\ref{relsporeje}) do not hold in $H^*(\Sigma_g^{\times n})$ if the letters $a$ and $b$ are replaced, respectively, by the letters $x$ and $y$. For instance, $a_j(p)a_j(1)=0$, but if $j,p\geq2$, $$x_j(p)x_j(1)=a_j(p)(a_j(1)-a_1(1))=-a_j(p)a_1(1)\neq0.$$ Likewise, $a_j(1)b_j(1)=\omega_j$, while for $2\leq j\leq n$,
\begin{align}
x_j(1)y_j(1)&=(a_j(1)-a_1(1))(b_j(1)-b_1(1))=\omega_j+\omega_1+b_1(1)a_j(1)-a_1(1) b_j(1)\label{deadelantobis}\\
&=\omega_j+\omega_1+y_1(1)(x_j(1)+x_1(1))-x_1(1)(y_j(1)+y_1(1))\nonumber\\&=\omega_j+\omega_1+y_1(1)x_j(1)-\omega_1-x_1(1)y_j(1)-\omega_1\nonumber\\&=\omega_j-\omega_1+y_1(1)x_j(1)-x_1(1)y_j(1).\label{unanueva}
\end{align}
}\end{ejem}

We are now in position to define the $s$-th zero-divisors of $E(g)_\infty^{*,0}$ we need. In fact, we start by describing four types of $s$-zero-divisors of $H^*(\Sigma_g^{\times n})$.
\begin{enumerate}[(I)]
\item For an element $u\in H^*(\Sigma_g^{\times n})$ of positive degree (so $u^2=0$), consider 
the product $\bar{u}\in H^*(\Sigma_g^{\times n})^{\otimes s}$ given by
\begin{eqnarray}
\bar{u}&:=& \displaystyle{\prod_{\ell=2}^{s} (u \otimes 1 \otimes \cdots \otimes 1 \otimes 1 -1 \otimes \cdots \otimes 1 \otimes \overset{\hspace{.5mm}\mbox{\scriptsize$\ell$}}{u} \otimes 1 \otimes \cdots \otimes 1) }\nonumber  \\
&=& \displaystyle{\sum_{\ell=1}^{s} \pm\,\, u \otimes u \otimes \cdots \otimes \overset{\hspace{.2mm}\ell}{1} \otimes u \otimes \cdots \otimes u.}\nonumber
\end{eqnarray}
Here, the index on top of a tensor factor indicates the coordinate where such a factor appears. Note that $\bar{u}$ is a product of $s-1$ $s$-th zero-divisors. We are interested in the product 
\begin{equation}\label{sdbarraswry}
\prod_{i=1}^{n} \bar{x}_i = \sum \pm x_{J_1} \otimes x_{J_2} \otimes \cdots \otimes x_{J_s}
\end{equation}
where the sum is taken over all subsets $J_1, J_2, \ldots, J_s \subseteq\{1,\ldots,n\}$ with the property that every $i \in\{1,\ldots,n\}$ belongs to exactly $s-1$ subsets $J_k$ ($1\leq k\leq s$), and where $$x_{J_t}:=\prod_{i\in J_t}x_i$$ for $t\in\{1,\ldots,s\}$. 
\item For $i \in\{1,\ldots,n\}$, consider the $s$-th zero-divisor
$$
\widetilde{y}_i := y_i \otimes 1 \otimes  \cdots \otimes 1 - 1 \otimes \cdots \otimes 1 \otimes y_i \in H^*(\Sigma_g^{\times n})^{\otimes s}
$$
and the product
\begin{equation}\label{sdcuna}
\prod_{i=1}^{n} \widetilde{y}_i = \!\!\sum_{J \subseteq\{1,\ldots,n\}} \pm\,\, y_{J^c} \otimes 1 \otimes \cdots  \otimes 1 \otimes y_J,
\end{equation}
where $J^c$ stands for the complement of $J$ in $\{1,\ldots,n\}$.
\item For $i\in\{2,\ldots,s-1\}$, consider the $s$-th zero-divisor $$y_{1,i}:= y_1 \otimes 1 \otimes \cdots \otimes 1 -  1 \otimes \cdots \otimes 1 \otimes \overset{i}{y_1} \otimes 1 \otimes \cdots \otimes 1\in H^*(\Sigma_g^{\times n})^{\otimes s}$$ and the product
\begin{equation}\label{sddobles}
\prod_{i=2}^{s-1} y_{1,i} =\!\!\sum_{(\epsilon_1,\ldots,\epsilon_s)\in M_s} \pm y_1^{\epsilon_1} \otimes y_1^{\epsilon_2} \otimes \cdots \otimes y_1^{\epsilon_{s-1}} \otimes 1,
\end{equation}
where $M_s:= \{(\varepsilon_1, \ldots, \varepsilon_{s-1}) \,\colon \exists !\,\, j\in \{1,\ldots,s-1\} \,\,\mbox{with}\,\, \varepsilon_j=0 \,\, \mbox{and} \,\,\varepsilon_i=1 \,\,\mbox{for $i\neq j$}\}$.
\item If $g\geq 2$, consider the $s$-th zero divisors $c,d\in H^*(\Sigma_g^{\times n})^{\otimes s}$ given by
\begin{eqnarray*}
c&=&a_1(2)\otimes 1\otimes1\cdots\otimes 1-1 \otimes a_1(2)\otimes1\cdots\otimes 1,\\
d&=&\begin{cases}
b_1(2)\otimes 1 - 1\otimes b_1(2), &  \mbox{if $s=2$;} \\ b_1(2)\otimes 1 \otimes 1\otimes 1 \otimes \cdots\otimes1 - 1 \otimes 1\otimes b_1(2)\otimes 1 \otimes \cdots\otimes 1,& \mbox{if $s\geq 3$.}
\end{cases}
\end{eqnarray*}
\end{enumerate}

The inequality in~(\ref{suficiente-mente-mente}) and, therefore, Theorem~\ref{main} for $g>0$ are immediate consequences of the following result, whose proof is the central goal in the reminder of the paper.

\begin{theo}\label{maintwo}
\begin{enumerate}[(i)]
\item The image of $\left(\rule{0mm}{3.5mm}\prod_{i=2}^{s-1} y_{1, i} \right) \hspace{-.8mm}\cdot\hspace{-.8mm}\left(\rule{0mm}{3.5mm}\prod_{i=1}^{n} (\bar{x}_i \widetilde{y}_i) \right)$ in $E(1)_\infty^{*,0}$ is non-zero.
\item If $g\geq2$, the image of $c\hspace{-.4mm}\cdot\hspace{-.4mm}d\hspace{-.4mm}\cdot\!\left(\rule{0mm}{3.5mm}\prod_{i=2}^{s-1} y_{1, i} \right)\hspace{-.8mm}\cdot\hspace{-.8mm}\left(\rule{0mm}{3.5mm}\prod_{i=1}^{n} (\bar{x}_i \widetilde{y}_i) \right)$ in $E(g)_\infty^{*,0}$ is non-zero.
\end{enumerate}
\end{theo}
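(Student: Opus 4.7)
The proof strategy is to expand the prescribed zero-divisor product in $H^*(\Sigma_g^{\times n})^{\otimes s}$, reduce modulo the ideal $D_g^{(s)}$ obtained by applying the Totaro ideal $D_g$ in each tensor factor, and exhibit a distinguished basis monomial of $(E(g)_\infty^{*,0})^{\otimes s}$ whose coefficient in the reduced expansion is non-zero.

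Using the expansions (\ref{sdbarraswry}), (\ref{sdcuna}), (\ref{sddobles}) and the four-term expansion of $c\cdot d$ in case (ii), each factor becomes an explicit sum of basis tensor monomials indexed respectively by a function $f\colon\{1,\dots,n\}\to\{1,\dots,s\}$, a subset $J\subseteq\{1,\dots,n\}$, an element of $M_s$, and (for case (ii)) one of the four summands of $c\cdot d$. Multiplying these factors together, using the graded-commutative rule $(a\otimes b)(c\otimes d)=(-1)^{|b||c|}ac\otimes bd$ and keeping careful track of signs, yields a large but explicit combinatorial sum in $H^*(\Sigma_g^{\times n})^{\otimes s}$.

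The key step is to identify a single choice of parameters whose corresponding monomial, after reduction modulo $D_g^{(s)}$, becomes a non-trivial basis element of $(E(g)_\infty^{*,0})^{\otimes s}$. For case (i), a natural candidate is the element $(0,1,\dots,1)\in M_s$ from $\prod y_{1,i}$, the subset $J=\{1,\dots,n\}$ from $\prod\tilde y_i$, and the constant function $f\equiv s$ from $\prod\bar x_i$. Using the identities $a_i^2=b_i^2=0$, $a_ib_i=\omega_i$ and the computations $x_1\cdots x_n=a_1\cdots a_n$, $y_1\cdots y_n=b_1\cdots b_n$, this yields, up to sign, the tensor
$$
a_1\cdots a_n\otimes a_1b_1a_2\cdots a_n\otimes\cdots\otimes a_1b_1a_2\cdots a_n\otimes b_1\cdots b_n
$$
of total degree $s(n+1)-2$ and degree $n+1$ in each middle tensor factor. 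For case (ii), since the cohomological dimension of $\F(\Sigma_g,n)$ forces each tensor factor to have degree at most $n+1$, the choices of $f$, $J$ and the chosen summand of $c\cdot d$ must be adjusted so that the two extra degrees from $c\cdot d$ are placed into positions that previously had degree $n$: a convenient choice is the summand of $c\cdot d$ giving $-a_1(2)\otimes 1\otimes b_1(2)\otimes 1\otimes\cdots\otimes 1$, coupled with a function $f$ and a subset $J$ that leave positions $1$ and $3$ strictly below top degree before the $c\cdot d$ contribution.

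The crux of the argument is to rule out cancellation: one must verify that no other term in the combinatorial expansion, after reduction modulo $D_g^{(s)}$, contributes to the same distinguished monomial. The plan is to introduce a monomial order on $(H^*(\Sigma_g^{\times n}))^{\otimes s}$ making the distinguished monomial maximal among those appearing in the expansion, and such that each Totaro generator $G_{ij}$ placed in any tensor position has leading term strictly larger, so that the reduction rules never produce the distinguished monomial from a smaller term. The main obstacle is the delicate sign and positional bookkeeping required to certify these two properties; the expected approach is to reduce one tensor factor at a time, using the identities (\ref{deadelantobis})-(\ref{unanueva}) to systematically track how the Totaro relations act on the relevant monomials.
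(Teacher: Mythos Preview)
Your proposal identifies the right combinatorial objects and a plausible target monomial, but the proof stops precisely at the step that constitutes the entire content of the theorem: ruling out cancellation. You acknowledge this yourself (``The crux of the argument is to rule out cancellation'') and then only sketch a plan based on an unspecified monomial order. There are two concrete problems with that plan. First, the Totaro ideal $D_g$ is \emph{not} generated by monomials---each generator~(\ref{relstotaro}) has four terms---so there is no a priori reason a monomial order exists for which ``reduction never produces the distinguished monomial from a smaller term.'' You would at minimum need a Gr\"obner-type argument tailored to these specific generators, and you have not supplied one. Second, even granting such an order, you have not shown that your distinguished element (e.g.\ $a_1\cdots a_n$ or $b_1\cdots b_n$ in a single tensor slot) is itself non-zero in $E(g)_\infty^{*,0}$; this requires knowing an additive basis of that quotient, which is exactly the hard part. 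Section~6 of the paper points out that this very gap---asserting a single monomial survives without controlling the quotient---is what made the original Cohen--Farber argument for $s=2$ incomplete.

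The paper's route is substantially different. Rather than tracking a distinguished monomial through the full quotient $E(g)_\infty^{*,0}$, it passes to a much smaller artificial quotient $B_g$ (obtained by further killing the elements~(\ref{bunch16}) and~(\ref{bunch17})). In $B_g$ the expansion of the product collapses to exactly \emph{two} surviving tensor terms, namely (\ref{primersumando}) and (\ref{segundosumando}), both built from $\omega_1 x_2\cdots x_n$ and $\omega_1 y_2\cdots y_n$. The non-vanishing and linear independence of these two elements in $B_g$ is then established as a separate technical result (Theorem~\ref{indlin}), whose proof requires an explicit additive basis for $A_g$ (Lemma~\ref{basebeta2}, Corollary~\ref{baseprimamodificada}) and a careful case analysis of how the ideal $\mathcal{J}_g$ interacts with that basis (Lemmas~\ref{calculosdirectos1} and~\ref{calculosdirectos2}). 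That infrastructure is what replaces your missing ``monomial order'' step, and it is not optional.
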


\section{A subquotient of the cohomology of $\F(\Sigma_g,n)$}
The proof of the non-vanishing of the products indicated in Theorem~\ref{maintwo} is greatly simplified by actually working on the quotient of $E(g)_\infty^{*,0}$ obtained by moding out by the ideal generated by the elements
\begin{equation}\label{bunch16}
x_i(p) x_j(q), \quad x_i(p) y_j(q),  \quad y_i(p) y_j(q)
\end{equation}
with $\hspace{.2mm}p,q\in\{2,\ldots,g\}$ and $\hspace{.3mm}i,j\in\{1,\ldots,n\}$, $i\neq j$, and by the elements 
\begin{equation}\label{bunch17}
x_i y_j
\end{equation}
with $i,j\in\{2,\ldots,n\}$, $i\neq j$. Our strategy has two main steps:
\begin{itemize}
\item[S1.] We first get a full additive description of the quotient $A_g$ of $H^*(\Sigma_g^{\times n})$ by the ideal generated by the elements in~(\ref{bunch16}).
\item[S2.] Then we prove that the products indicated in Theorem~\ref{maintwo} are in fact non-trivial in the quotient $B_g$ of $A_g$ by the $A_g$-ideal generated by the elements in~(\ref{relstotaro}) and~(\ref{bunch17}).
\end{itemize}
Furthermore, when dealing with the second step, and in view of the relations coming from~(\ref{bunch16}), the elements in~(\ref{relstotaro}) can safely be replaced by the elements
\begin{equation}\label{relstotarosimplificadas}
\omega_i+\omega_j+b_i(1) a_j(1)-a_i(1) b_j(1)
\end{equation}
for $1\leq i<j\leq n$. It follows that the identity maps on generators induce ring morphisms 
$B_1\to B_2\to B_3\to\cdots$. In particular, item~{\em(i)} in Theorem~\ref{maintwo} becomes a direct consequence of the proof of item~{\em(ii)} in Theorem~\ref{maintwo} sketched in steps S1 and S2 above. Accordingly, we assume $g\geq2$ in the remainder of the section.

\medskip
Step S1 above is accomplished in either of the next two results.
\begin{lema}\label{basebeta2}
An additive basis of $A_g$ is given by the set $\beta_2$ consisting of the $A_g$-images of the monomials $u_1\cdots u_n\in H^*(\Sigma_g^{\times s})$ satisfying the following two conditions:
\begin{enumerate}[(i)]
\item\label{unoi} For each $i\in\{1,\ldots,n\}$, the factor $u_i$ belongs to $\{1,a_i(p), b_i(p), \omega_i\colon 1\leq p\leq g\}$.
\item\label{dosi} At most one of $u_1,\ldots, u_n$ belongs to $\{a_i(p),b_i(p),\omega_i\colon 1\leq i\leq n\mbox{ and }\,2\leq p\leq g\}$.
\end{enumerate}
\end{lema}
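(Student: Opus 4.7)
My plan is to realize $A_g$ as a direct summand of $H^*(\Sigma_g^{\times n})$ via a Künneth-type decomposition. For each coordinate $i$, split the one-factor ring as $H^*(\Sigma_g)_i = L_i \oplus H_i$, where the ``low'' subspace is $L_i = \mathbb{Q}\{1, a_i(1), b_i(1)\}$ and the ``high'' subspace is $H_i = \mathbb{Q}\{a_i(p), b_i(p) : 2\le p\le g\} \oplus \mathbb{Q}\omega_i$. Tensoring over $i$ yields the direct sum decomposition $H^*(\Sigma_g^{\times n}) = \bigoplus_{S\subseteq\{1,\ldots,n\}} \mathcal{L}(S)$ with $\mathcal{L}(S) := \bigotimes_{i\in S} H_i \otimes \bigotimes_{i\notin S} L_i$. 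Condition (ii) says precisely that $\beta_2$ is the subset of $\beta_1$ lying in $\bigoplus_{|S|\le 1}\mathcal{L}(S)$, so the lemma reduces to the identity $I = \bigoplus_{|S|\ge 2}\mathcal{L}(S)$, where $I$ denotes the ideal generated by the elements in~(\ref{bunch16}).

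For the inclusion $I \supseteq \bigoplus_{|S|\ge 2}\mathcal{L}(S)$, I take a $\beta_1$-monomial $u=u_1\cdots u_n$ with two distinct high factors $u_i, u_j$ ($i\ne j$) and argue that $u_i u_j$ lies in $I$, whence so does $u$. When both factors have the form $a_k(p)$ or $b_k(p)$ with $p\ge 2$, the product $u_iu_j$ is, up to sign, a generator of $I$. When one or both factors equal an $\omega$, I exploit the identity $\omega_k = a_k(p)b_k(p)$ valid for any $p\ge 2$ to reduce to the first case; for example, $\omega_i\cdot a_j(q) = a_i(q)b_i(q)a_j(q) = \pm\, a_i(q)a_j(q)\cdot b_i(q) \in I$, and similarly $\omega_i\omega_j = \pm\, a_i(2)a_j(2)\cdot b_i(2)b_j(2) \in I$.

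For the reverse inclusion $I \subseteq \bigoplus_{|S|\ge 2}\mathcal{L}(S)$, I expand a product $z\cdot r$ (with $z$ one of the listed generators of $I$ and $r$ a $\beta_1$-monomial) back into $\beta_1$ and show that every surviving summand has both of the two coordinates $i,j$ singled out by $z$ high. This reduces, coordinate by coordinate, to the observation that multiplying any high element of $H^*(\Sigma_g)$ by any element of $\{1, a(p), b(p), \omega : 1\le p\le g\}$ yields either $0$ or again a high element; this is a short finite check from the relations~(\ref{relsporeje}), with the cases $a(p)\cdot b(p) = \omega$ and $\xi\cdot 1 = \xi$ showing that ``high'' is preserved whenever the product does not vanish.

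The only real obstacle is the sign bookkeeping when using graded commutativity and the patience of running the small case analysis in $H^*(\Sigma_g)$; nothing deeper is needed. The argument is essentially a Künneth decomposition tailored to this particular ideal.
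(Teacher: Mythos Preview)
Your proof is correct and follows essentially the same approach as the paper's: both arguments identify the ideal $I$ with the span of those $\beta_1$-monomials having at least two ``high'' factors, using the identity $\omega_k=a_k(p)b_k(p)$ (valid for any $p\ge2$) to absorb the $\omega$ cases, and the observation that a high element times any basis element of $H^*(\Sigma_g)$ is either zero or again high. Your K\"unneth-style packaging via $H^*(\Sigma_g)_i=L_i\oplus H_i$ is a tidy reformulation, but the actual case checks coincide with the paper's.
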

\begin{proof}
Recall the additive basis $\beta_1$ of $H^*(\Sigma_g^{\times n})$ consisting of the products 
$\mathbf{u}=u_1\cdots u_n$ satisfying~(\ref{condbeta1})---i.e.~condition~{\em(\ref{unoi})} of the lemma. In these terms, the defining relations for $A_g$ coming from the elements in~(\ref{bunch16}) take the form
\begin{equation}\label{tomalaforma}
a_i(p)a_j(q)=a_i(p)b_j(q)=b_i(p)b_j(q)=0
\end{equation}
for $p,q\in\{2,\ldots,g\}$ and $i,j\in\{1,\ldots,n\}$ with $i\neq j$. The fact that $a_i(p)b_i(p)=\omega_i$ for any pair $(i,p)$ then implies that a basis element $u_1\cdots u_n\in\beta_1$ vanishes in $A_g$ whenever condition~{\em(\ref{dosi})} of the lemma fails. The result follows since, for any $\mathbf{u}=u_1\cdots u_n\in\beta_1$, the $\mathbf{u}$-multiple (in $H^*(\Sigma_g^{\times n})$) of any of the elements
$$
a_i(p)a_j(q), \quad a_i(p)b_j(q), \quad b_i(p)b_j(q)
$$
as in~(\ref{tomalaforma}) either vanishes or, else, reduces (up to a sign) to an element of $\beta_1$ for which~{\it(\ref{dosi})} fails. For instance, in the case of an element of the form $a_i(p_0)b_j(q_0)$ with $p_0,q_0\geq2$ and $i\neq j$, $$u_1\cdots u_n\cdot a_i(p_0)b_j(q_0)=\pm u_1\cdots \widehat{u}_i\cdots \widehat{u}_j\cdots u_n\left(u_i\hspace{.2mm}a_i(p_0)\right)\left(u_j\hspace{.2mm}b_j(q_0)\right)$$ which, in view of~{\it(\ref{unoi})}, is either zero or, else, an element of $\beta_1$ not satisfying~{\it(\ref{dosi})}.
\end{proof}

\begin{corollary}\label{baseprimamodificada}
An additive basis of $A_g$ is given by the set $\beta'_2$ consisting of the $A_g$-images of the monomials $v_1\cdots v_n\in H^*(\Sigma_g^{\times n})$ satisfying the following two conditions:
\begin{enumerate}[(i)]
\item\label{unoiprima} For each $i\in\{1,\ldots,n\}$, the factor $v_i$ belongs to $\{1,x_i(p), y_i(p), \omega_i\colon 1\leq p\leq g\}$.
\item\label{dosiprima} At most one of $v_1,\ldots, v_n$ belongs to $\{x_i(p),y_i(p),\omega_i\colon 1\leq i\leq n\mbox{ and }\,2\leq p\leq g\}$.
\end{enumerate}
\end{corollary}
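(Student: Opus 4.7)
The plan is to deduce the corollary from Lemma~\ref{basebeta2} by a unitriangular change-of-basis argument. Both $\beta_2$ and $\beta'_2$ are indexed by identical combinatorial data, so there is a natural bijection $\pi\colon\beta'_2\to\beta_2$ sending each $x_i(p)$ to $a_i(p)$, each $y_i(p)$ to $b_i(p)$, and fixing $1$ and $\omega_i$. By construction, $\pi$ acts as the identity on each factor except when $v_i=x_i(1)$ or $y_i(1)$ with $i\geq 2$, in which case $v_i=a_i(1)-a_1(1)$ or $v_i=b_i(1)-b_1(1)$. The central claim is that, in $A_g$,
\[
v_1\cdots v_n \;=\; \pi(v_1\cdots v_n)\;+\;\sum_\alpha c_\alpha u^{(\alpha)}
\]
with each $u^{(\alpha)}\in\beta_2$ satisfying $\deg(u^{(\alpha)}_1)>\deg(v_1)$. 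Granting this, ordering $\beta_2$ by nondecreasing first-factor degree makes the matrix expressing $\beta'_2$ in the basis $\beta_2$ unitriangular, so $\beta'_2$ is also an additive basis.

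To establish the key claim, I would lift $v_1\cdots v_n$ to $H^*(\Sigma_g^{\times n})$ and expand by distributing each binomial $x_i(1)=a_i(1)-a_1(1)$ and $y_i(1)=b_i(1)-b_1(1)$. Selecting $a_i(1)$ or $b_i(1)$ at every step---the \emph{main} branch---recovers $\pi(v_1\cdots v_n)$ exactly. Every other \emph{secondary} branch relocates a $-a_1(1)$ or $-b_1(1)$ into position~1, strictly increasing $\deg(u_1)$. Since $a_1(1)^2=b_1(1)^2=0$, a surviving secondary summand can only contribute at most one $a_1(1)$ and one $b_1(1)$ there, and a brief case check on $v_1\in\{1,x_1(p),y_1(p),\omega_1\}$, using the relations of $H^*(\Sigma_g)$ in the first tensor factor, identifies each such summand as a scalar multiple of an element of $\beta_2$---with one awkward exception.

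The main obstacle is a secondary branch that inserts both $a_1(1)$ and $b_1(1)$ in position~1, producing $u_1=a_1b_1=\omega_1$, when $v_1\cdots v_n$ already carries a forbidden factor $v_k$ with $k\geq 2$ and $p\geq 2$. The resulting monomial then has two factors in the forbidden set and so lies outside $\beta_2$. I would overcome this by showing the monomial vanishes already in $A_g$: writing $\omega_1=a_1(2)b_1(2)$ (valid since $a_1(p)b_1(p)=\omega_1$ for every $p$) and invoking the defining relations $b_1(2)\cdot a_k(2)=b_1(2)\cdot b_k(2)=0$ together with $\omega_k=a_k(2)b_k(2)$ in $A_g$, one immediately obtains $\omega_1\cdot a_k(2)=\omega_1\cdot b_k(2)=\omega_1\cdot\omega_k=0$ in $A_g$. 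Hence every secondary summand is either in $\beta_2$ with strictly larger $\deg(u_1)$ or zero in $A_g$; the triangular description goes through, and $\beta'_2$ is an additive basis of $A_g$.
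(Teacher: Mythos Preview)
Your argument is correct. The approach differs from the paper's in direction and organization: the paper fixes an element of $\beta_2$ and expands it as a combination of elements of $\beta'_2$ modulo $\mathcal{I}$, splitting into five cases according to whether $1\in J$ and whether the exceptional index $i_0$ occurs; combined with $|\beta_2|=|\beta'_2|$ this shows $\beta'_2$ spans and hence is a basis. You go the opposite way, expanding each $\beta'_2$ element in terms of $\beta_2$, and replace the case analysis by the single structural observation that the change-of-basis matrix is unitriangular for the ordering by $\deg(u_1)$. Your obstacle case (a secondary branch producing $u_1=\omega_1$ alongside another factor from the forbidden set, which then vanishes in $A_g$) is exactly the phenomenon the paper handles in its Cases~3 and~5 by arguing that certain terms lie in $\mathcal{I}$. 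The triangularity viewpoint is tidier and avoids the paper's longer case split; the paper's version, on the other hand, gives explicit formulas for $\beta_2$ in terms of $\beta'_2$, which are closer to what is used downstream. One small point worth tightening in your write-up: your displayed relations $b_1(2)\cdot a_k(2)=b_1(2)\cdot b_k(2)=0$ should be stated for $b_1(q)\cdot a_k(p)$ with arbitrary $p,q\geq2$, since the forbidden $v_k$ need not have $p=2$; the same argument (using $\omega_1=a_1(p)b_1(p)$ or the general relation in $A_g$) covers this with no extra work.
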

\begin{proof}
Since $\beta_2$ and $\beta'_2$ have the same cardinality, it is enough to prove that (the obvious preimage in $H^*(\Sigma_g^{\times n})$ of) each element of $\beta_2$ can be expressed, modulo the $H^*(\Sigma_g^{\times n})$-ideal $\mathcal{I}$ generated by elements in~(\ref{bunch16}), in terms of (the obvious preimages of) elements of $\beta'_2$.

\smallskip Fix $u_1\cdots u_n\in\beta_2$ and let $J\subseteq\{1,\ldots,n\}$ be the set of indices $i$ for which $u_i\in\{a_i,b_i\}$. Thus $u_i=1$ for all $i\in\{1,\ldots,n\}-J$ with the possible exception of a single element $i_0\in\{1,\ldots,n\}-J$ for which $u_{i_0}\in\{a_{i_0}(p),b_{i_0}(p),\omega_{i_0}\colon 2\leq p\leq g\}$. In what follows we let
$$
z_i=\begin{cases}
x_i, & \mbox{if $u_i=a_i$;}\\
y_i, & \mbox{if $u_i=b_i$,}
\end{cases}
\qquad\mbox{and}\qquad
u'_i=\begin{cases}
x_1, & \mbox{if $u_i=a_i$;}\\
y_1, & \mbox{if $u_i=b_i$,}
\end{cases}$$
for $i\in J$.

\medskip\noindent 
{\bf Case 1.} Assume $1\in J$, and that the possible exceptional $i_0$ does not hold. Then
\begin{align*}
u_1\cdots u_n=\prod_{i\in J}u_i=u_1\!\!\!\prod_{i\in J-\{1\}}\!\!\left(z_i+u'_i\right)=z_1\left(\,\prod_{i\in J-\{1\}}z_i+\sum_{j\in J-\{1\}}\left(\pm u'_j\!\!\!\prod_{\ i\in J-\{1,j\}}\!\!z_i\right)\right)\!
\end{align*}
which, as an element of $H^*(\Sigma_g^{\times n})$, is a linear combination of elements in $\beta'_2$. (Note that some terms in the latter summation may vanish as the corresponding factor $z_1u'_j$ may be trivial.)

\medskip\noindent 
{\bf Case 2.} Assume $1\not\in J$, and that the possible exceptional $i_0$ does not hold. Then
\begin{align*}
u_1\cdots u_n&\;=\;\prod_{i\in J}u_i\;=\;\prod_{i\in J}\left(z_i+u'_i\right)\\&\;=\prod_{i\in J}z_i+\sum_{j\in J}\left(\pm u'_j\!\!\!\prod_{\ i\in J-\{j\}}\!\!z_i\right)+\!\!\!\sum_{\mbox{\tiny$
\begin{matrix}
j_1,j_2\in J \\ j_1<j_2
\end{matrix}
$}}\!\left(\pm u'_{j_1}u'_{j_2}\!\!\!\prod_{\ i\in J-\{j_1,j_2\}}\!\!z_i\right)\!,
\end{align*}
again a linear combination in $H^*(\Sigma_g^{\times n})$ of elements in $\beta'_2$. (As in the previous case, some elements in the latter summation may vanish as the corresponding factor $u'_{j_1}u'_{j_2}$ may be trivial.)

\medskip\noindent 
{\bf Case 3.} Assume $1\in J$, and that the possible exceptional case $i_0$ holds ($i_0\neq1$ is forced). Then
\begin{align*}
u_1\cdots u_n&=\pm u_1u_{i_0}\!\!\!\prod_{i\in J-\{1\}}\!\!\left(z_i+u'_i\right)=\pm z_1u_{i_0}\left(\,\prod_{i\in J-\{1\}}z_i+\sum_{j\in J-\{1\}}\left(\pm u'_j\!\!\!\prod_{\ i\in J-\{1,j\}}\!\!z_i\right)\right)\\
&=\pm u_{i_0}\left(\,\prod_{i\in J}z_i+\sum_{j\in J-\{1\}}\left(\pm u'_j\!\!\!\prod_{\ i\in J-\{j\}}\!\!z_i\right)\right).
\end{align*}
This time the term $u_{i_0}\prod_{i\in J}z_i$ lies in $\beta'_2$ (for $a_{i_0}(p)=x_{i_0}(p)$ and $b_{i_0}(p)=y_{i_0}(p)$ if $2\leq p\leq g$), while each of the terms
\begin{equation}\label{eachterm}
u_{i_0}u'_j\prod_{i\in J-\{j\}}z_i
\end{equation}
(with $j\in J-\{1\}$) vanishes modulo $\mathcal{I}$. In fact, the factor $u'_jz_1$ in~(\ref{eachterm}) is 0 or $\omega_1$; either way $u_{i_0}u'_jz_1\equiv0\bmod \mathcal{I}$ since, after all, $\omega_t=a_t(2)b_t(2)=x_t(2)y_t(2)$ for $t=1,i_0$ (c.f.~the assertion following~(\ref{tomalaforma}) in the proof of Lemma~\ref{basebeta2}).

\medskip\noindent 
{\bf Case 4.} Assume that the exceptional $i_0$ holds with $i_0=1$ (so $1\not\in J$). Then
\begin{align*}
u_1\cdots u_n=\pm u_{i_0}\prod_{i\in J}u_i=\pm u_{i_0}\prod_{i\in J}(z_i+u'_i)=\pm u_{i_0}\left(\,\prod_{i\in J}z_i+\sum_{j\in J}\pm u'_j\!\!\prod_{i\in J-\{j\}}\!\!z_i\,\right).
\end{align*}
As in Case 3, the term $u_{i_0}\prod_{i\in J}z_i$ lies in $\beta'_2$. But now, each of the terms $u_{i_0}u'_j\prod_{i\in J-\{j\}}z_i$ (with $j\in J$) vanishes directly in $H^*(\Sigma_g^{\times n})$. Indeed, $u_{i_0}u'_j=0$ as $u'_j\in\{a_1,b_1\}$ and $u_{i_0}\in\{a_1(p),b_1(p), \omega_1\colon 2\leq p\leq g\}$.

\medskip\noindent 
{\bf Case 5.} Assume $1\not\in J$, and that the exceptional $i_0$ holds with $i_0\neq1$. Then
\begin{align*}
u_1\cdots u_n=\pm u_{i_0}\prod_{i\in J}(z_i+u'_i)=\pm u_{i_0}\left(\,\prod_{i\in J}z_i+\sum_{j\in J}\pm u'_j\!\!\!\prod_{i\in J-\{j\}}\!\!\!z_i+\sum_{\mbox{\tiny$
\begin{matrix}
j_1,j_2\in J\\j_1<j_2
\end{matrix}
$}}\pm u'_{j_1}u'_{j_2}\!\!\!\prod_{i\in J-\{j_1,j_2\}}\!\!\!z_i\right).
\end{align*}
Now the term $u_{i_0}\prod_{i\in J}z_i$ as well as the terms $u_{i_0}u'_j\prod_{i\in J-\{j\}}z_i$ (with $j\in J$) lie in $\beta'_2$. The rest of the terms in the previous displayed equation vanish modulo $\mathcal{I}$ just as in Case 3 above.
\end{proof}

We now start working toward the completion of step S2. Recall that $B_g$ is the quotient of $A_g$ by the ideal generated by the elements in~(\ref{bunch17}) and~(\ref{relstotarosimplificadas}). As noted in~(\ref{deadelantobis}), the case $1=i<j\leq n$ of the latter generators is given by $x_jy_j$, whereas for the case $2\leq i<j\leq n$ we have
\begin{eqnarray*}
\omega_i+\omega_j+b_ia_j-a_ib_j&=&(a_i-a_j)(b_i-b_j)\\&=&(x_i-x_j)(y_i-y_j)\\&=&x_iy_i+x_jy_j-x_iy_j-x_jy_i.
\end{eqnarray*}
Consequently we will work with the simplified presentation
\begin{equation}\label{ecopre}
B_g=A_g/\mathcal{J}_g
\end{equation}
where $\mathcal{J}_g$ is the $A_g$-ideal generated by the products $x_iy_j$ with $i,j\in\{2,\ldots,n\}$.

\medskip
A key ingredient for step S2 is given by the next result, whose proof is deferred to the next section of the paper.

\begin{theo}\label{indlin}
The $B_g$-images of the two elements $\,\omega_1x_2\cdots x_n,\,\omega_1y_2\cdots y_n\in H^*(\Sigma_g^{\times n})$ are distinct and, in fact, linearly independent.
\end{theo}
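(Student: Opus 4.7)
The plan is to introduce a bigrading on $A_g$ and $B_g$ that separates $\omega_1 x_2\cdots x_n$ from $\omega_1 y_2\cdots y_n$, thereby reducing Theorem~\ref{indlin} to two symmetric non-vanishing statements, and then to analyze the bidegree $(n,1)$ piece of $B_g$ explicitly enough to witness the non-vanishing. Concretely, equip $A_g$ with the bigrading $(d_a,d_b)$ assigning $(1,0)$ to each $a_i(p)$, $(0,1)$ to each $b_i(p)$, and $(1,1)$ to each $\omega_i$. Every generator $x_iy_j$ of $\mathcal{J}_g$ is bihomogeneous of bidegree $(1,1)$, so the bigrading descends to $B_g$. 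Because $\omega_1 a_1 = a_1 b_1 a_1 = 0$, expanding $x_i = a_i - a_1$ (for $i\ge 2$) kills every term of $\omega_1\,x_2\cdots x_n$ containing an $a_1$, leaving $\omega_1\,x_2\cdots x_n = \omega_1\,a_2\cdots a_n =: \alpha_1$ in bidegree $(n,1)$; symmetrically $\omega_1\,y_2\cdots y_n = \omega_1\,b_2\cdots b_n =: \beta_1$ in bidegree $(1,n)$. For $n\ge 2$ these bidegrees differ, so any relation $c_1\alpha_1+c_2\beta_1=0$ in $B_g$ decouples by bigrading into $c_1\alpha_1=0$ and $c_2\beta_1=0$, reducing the theorem to showing $\alpha_1\ne 0$ and $\beta_1\ne 0$ in $B_g$.

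\textbf{Bidegree analysis.} By Lemma~\ref{basebeta2}, the bidegree $(n,1)$ part of $A_g$ is spanned by the $n$ classes $\alpha_i:=\omega_i\prod_{k\ne i}a_k$, $i=1,\dots,n$: a $\beta_2$-basis element of this bidegree must have exactly one $\omega$-slot and $n-1$ $a$-slots, and condition~(ii) of $\beta_2$ (which counts $\omega_i$ among the exceptional elements) forces the remaining $a$'s to carry $p=1$. Likewise the bidegree $(n-1,0)$ part has basis $\{[m]:=\prod_{k\ne m}a_k\}_{m=1}^n$ together with exceptional variants $a_r(p)\prod_{k\ne m,r}a_k$ with $p\ge 2$, so the bidegree $(n,1)$ piece of $\mathcal{J}_g$ is spanned by products $f\cdot x_iy_j$ with $f$ running through this basis and $i,j\in\{2,\dots,n\}$. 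For an exceptional $f$, the identities $\omega_l=a_l(2)b_l(2)$ together with the vanishings $a_r(p)a_l(q)=a_r(p)b_l(q)=0$ (for $p,q\ge 2$, $r\ne l$, from~\eqref{bunch16}) and $a_r(p)b_r(1)=0$ (for $p\ne 1$, from~\eqref{relsporeje}) force every term of $f\cdot x_iy_j$ to vanish in $A_g$. For $f=[m]$, expanding
$$x_iy_j \,=\, a_ib_j - a_ib_1 - a_1b_j + \omega_1, \qquad x_jy_j \,=\, \omega_j + \omega_1 - a_jb_1 - a_1b_j,$$
and using $a_k\cdot[m]=0$ whenever $k\ne m$ (since $a_k$ then already appears in $[m]$), every product $[m]\cdot x_iy_j$ reduces either to $0$ or to $\pm(\alpha_1-\alpha_k)$ for some $k\ge 2$.

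\textbf{Conclusion and main obstacle.} The bidegree $(n,1)$ piece of $\mathcal{J}_g$ is therefore contained in the subspace $W:=\mathrm{span}(\alpha_1-\alpha_k : 2\le k\le n)$ of the bidegree $(n,1)$ part of $A_g$. The functional $\phi\colon \sum c_i\alpha_i\mapsto\sum_{i=1}^n c_i$ satisfies $\phi|_W=0$ and $\phi(\alpha_1)=1$, so $\alpha_1\notin W$, and hence $\alpha_1\ne 0$ in $B_g$; the statement for $\beta_1$ follows by interchanging the roles of the $a$'s and the $b$'s. The main technical obstacle is the case analysis underlying the previous paragraph: one must systematically verify that every exceptional $f$ annihilates every $x_iy_j$ and that each non-exceptional product $[m]\cdot x_iy_j$ reduces to a difference $\pm(\alpha_1-\alpha_k)$ rather than to a bare $\alpha_1$. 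The sub-cases (according as $m$ equals $1$, $i$, $j$, or none, and whether $i=j$) are individually routine, but the whole argument critically hinges on the fact that no sub-case produces a pure $\alpha_1$ that would escape $W$.
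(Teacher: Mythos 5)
Your overall strategy is sound and genuinely different from the paper's: you bigrade $A_g$ (hence $B_g$) by total $a$- and $b$-degree, separate $\omega_1x_2\cdots x_n$ from $\omega_1y_2\cdots y_n$ by bidegree, and then only need to control the bidegree-$(n,1)$ slice of $\mathcal{J}_g$, whereas the paper works with the basis $\beta'_2$ in the $x,y$-variables, splits it as $\gamma_1\sqcup\gamma_2$ and verifies (via Lemmas~\ref{calculosdirectos1} and~\ref{calculosdirectos2}) that $\mathcal{J}_g$ respects that splitting. The bigrading descends correctly, the identification $\omega_1x_2\cdots x_n=\omega_1a_2\cdots a_n$ is right, your description of the bidegree-$(n,1)$ and $(n-1,0)$ pieces of $A_g$ is right, and the vanishing of $f\cdot x_iy_j$ for exceptional $f$ checks out. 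However, the key reduction contains a sign error that breaks your final step. With $\alpha_k:=\omega_k\prod_{l\neq k}a_l$ (factors in increasing order) one computes, for any $i,j\in\{2,\ldots,n\}$,
$$[1]\cdot x_iy_j=(-1)^{n}\,a_1a_2\cdots a_n\,(b_j-b_1)=\alpha_1+(-1)^{j}\alpha_j,\qquad [i]\cdot x_iy_j=\pm\bigl(\alpha_1+(-1)^{j}\alpha_j\bigr),$$
and $[m]\cdot x_iy_j=0$ for $m\notin\{1,i\}$. Concretely, $[1]\cdot x_2y_2=\alpha_1+\alpha_2$ already for $n=2$. So the nonzero reductions are $\pm(\alpha_1+(-1)^{j}\alpha_j)$, \emph{not} $\pm(\alpha_1-\alpha_j)$; the bidegree-$(n,1)$ part of $\mathcal{J}_g$ is not contained in your $W=\mathrm{span}(\alpha_1-\alpha_k)$, and your functional $\phi\bigl(\sum c_i\alpha_i\bigr)=\sum c_i$ does not vanish on it (it takes the value $2$ on $\alpha_1+\alpha_2$). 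This alternating sign is corroborated by the paper's own expression~(\ref{fin1}), where the coefficient $(-1)^j$ appears.

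The gap is repairable inside your framework: the bidegree-$(n,1)$ part of $\mathcal{J}_g$ is exactly $\mathrm{span}\{\alpha_1+(-1)^{k}\alpha_k:\,2\leq k\leq n\}$, which still does not contain $\alpha_1$; replace your functional by $\alpha_k\mapsto(-1)^{k+1}$, which kills every generator and sends $\alpha_1$ to $1$. For the symmetric statement in bidegree $(1,n)$, either redo the (identical) computation with the roles of $a$ and $b$ exchanged, or note that $a(p)\mapsto b(p)$, $b(p)\mapsto a(p)$ is a ring automorphism only after also sending $\omega\mapsto-\omega$ (it then preserves the ideals defining $A_g$ and $B_g$). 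With these corrections your argument goes through and is arguably leaner than the paper's, since it avoids the full $\gamma_1/\gamma_2$ bookkeeping; as written, though, the crucial step fails.
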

\begin{proof}[Proof of item {(\it ii)} of Theorem~\ref{maintwo} for $s=2$]
As advertised at the beginning of this section, it suffices to work in $B_g$. Direct calculation gives $c d x_1 y_1=2\omega_1\otimes\omega_1$ and (by induction on $n\geq2$, keeping in mind the relations in $B_g$ coming from the ideal $\mathcal{J}_g$) $$cd\left(\,\prod_{i=1}^n(\bar{x}_i\widetilde{y}_i)\right)=2\omega_1\otimes\omega_1\left(\,\pm \,x_2\cdots x_n\otimes y_2\cdots y_n\,\pm\, y_2\cdots y_n\otimes x_2\cdots x_n\rule{0mm}{4mm}\right),$$ which is non-zero in $B_g$ in view of Theorem~\ref{indlin}. (Note that the factor~(\ref{sddobles}) degenerates to~1.)
\end{proof}

The proof of item {(\it ii)} of Theorem~\ref{maintwo} for $s\geq3$ is slightly more involved, partly due to the presence of the factor~(\ref{sddobles}), and partly because of the resulting larger combinatorial objects to deal with. Actually, the main reason for the $s$-th zero-divisor $d$ to be slightly different for $s\geq3$ is to simplify the proof argument. 

\begin{proof}[Proof of item {(\it ii)} of Theorem~\ref{maintwo} for $s\geq3$]
Up to a sign, the product under consideration, $cd\left(\,\prod_{i=2}^{s-1} y_{1, i} \right) \left(\,\prod_{i=1}^{n} (\bar{x}_i \widetilde{y}_i) \right)$, is a sum running over the subsets $J,J_1, J_2, \ldots, J_s$ of $\{1,\ldots,n\}$ specified in~(\ref{sdbarraswry}) and~(\ref{sdcuna}), over the tuples $(\epsilon_1, \epsilon_2, \ldots, \epsilon_{s-1})\in M_s$ specified in~(\ref{sddobles}), and over the pairs $(\alpha_1,\alpha_2)$ and $(\beta_1,\beta_3)$ satisfying $\{\alpha_1,\alpha_2\}=\{0,1\}=\{\beta_1,\beta_3\}$. The term $T$ corresponding to such a data takes the form indicated below, depending on the value of~$s$.

\smallskip\noindent
$\bullet$ If $s\geq5$,
$$
\pm \, a_1(2)^{\alpha_1} b_1(2)^{\beta_1} y_1^{\epsilon_1}y_{J^c}x_{J_1}\otimes a_1(2)^{\alpha_2}y_1^{\epsilon_2}x_{J_2} \otimes b_1(2)^{\beta_3}y_1^{\epsilon_3}x_{J_3} \otimes y_1^{\epsilon_4}x_{J_4}\otimes \cdots \otimes y_1^{\epsilon_{s-1}} x_{J_{s-1}} \otimes\, y_{J}x_{J_s}.
$$

\smallskip\noindent
$\bullet$ If $s=4$,
$$
\pm \, a_1(2)^{\alpha_1} b_1(2)^{\beta_1} y_1^{\epsilon_1}y_{J^c}x_{J_1}\otimes a_1(2)^{\alpha_2}y_1^{\epsilon_2}x_{J_2} \otimes b_1(2)^{\beta_3}y_1^{\epsilon_3}x_{J_3} \otimes y_Jx_{J_4}.
$$

\smallskip\noindent
$\bullet$ If $s=3$,
$$
\pm \, a_1(2)^{\alpha_1} b_1(2)^{\beta_1} y_1^{\epsilon_1}y_{J^c}x_{J_1}\otimes a_1(2)^{\alpha_2}y_1^{\epsilon_2}x_{J_2} \otimes b_1(2)^{\beta_3} y_Jx_{J_3}.
$$
In either case, such a term $T$ vanishes in $B_g$ unless each of the following conditions holds:
\begin{enumerate}
\item $J=\{1\}$ or $J=\{1,\ldots,n\}$. 

(Indeed, if $1\not\in J$, then the non-triviality of $T$ in $B_g$ forces $\alpha_1=\beta_1=\epsilon_1=0$, so that $\alpha_2=\beta_3=1$ and $\epsilon_i=1$ for $2\leq i\leq s-1$, which is impossible since $a_1(2) y_1=0$. Thus $1\in J$ must hold. Furthermore, $2$ lies in $s-1$ of the sets $J_1,\ldots,J_{s}$ so, in particular, $x_2$ shows up either in the first tensor factor of $T$ (where $y_{J^c}$ appears), or in the last tensor factor of $T$ (where $y_J$ appears). Therefore, the reduced form of the defining relations in $B_g$ and the non-triviality of $T$ in $B_g$ force either $J-\{1\}=\varnothing$, or $J-\{1\}=\{2,\ldots,n\}$.)

\item $1\not\in J_1$, so that $1\in J_i$ for $2\leq i\leq s$. 

(Indeed, if $1\in J_1$, the non-triviality of $T$ in $B_g$ forces $\alpha_1=0=\beta_1$, so $\alpha_2=1=\beta_3$. But this is incompatible with the non-triviality of $T$  in $B_g$ and the fact that $1$ must lie in either $J_2$ or $J_3$.)

\item $\alpha_2=0=\beta_3$, so that $\alpha_1=1=\beta_1$. 

(For we have just noted that $1\in J_2\cap J_3$.)

\item $\epsilon_1=0$, so that $\epsilon_i=1$ for $2\leq i\leq s-1$. 

(For we have just noted that $\alpha_1=1=\beta_1$.)
\end{enumerate}
Further, when $J=\{1\}$, the term $T$ vanishes in $B_g$ unless $J_1=\varnothing$  (the inclusion $J_1\subseteq\{1\}$ follows by looking at the first tensor factor of $T$ and the relations defining $B_g$, whereas the actual equality $J_1=\varnothing$ follows from condition 2 above) and, therefore, $J_i=\{1,\ldots,n\}$ for $2\leq i\leq s$. Thus, the only such $T$ with (potentially) non-vanishing image in $B_g$ is, up to a sign,
\begin{align}
a_1(2)b_1(2)y_2\cdots y_n{}\otimes{} & y_1 x_1\cdots x_n\otimes\cdots\otimes y_1 x_1\cdots x_n\nonumber\\&=\pm\omega_1 y_2\cdots y_n\otimes\omega_1 x_2\cdots x_n \otimes \cdots\otimes \omega_1 x_2\cdots x_n.\label{primersumando}
\end{align}
Likewise, when $J=\{1,\ldots,n\}$, the term $T$ vanishes in $B_g$ unless $J_s=\{1\}$ (the inclusion $J_s\subseteq\{1\}$ follows by looking at the last tensor factor of $T$ and the relations defining $B_g$, whereas the actual equality $J_s=\{1\}$ follows from condition 2 above) and $J_1=\{2,\ldots,n\}$ while $J_i=\{1,\ldots,n\}$ for $2\leq i\leq s-1$ (in view of condition 2 above and the properties of the $J_i$'s).  Thus, the only such $T$ with (potentially) non-vanishing image in $B_g$ is, up to a sign,
\begin{align}
a_1(2)b_1(2)x_2\cdots x_n{}\otimes{} & y_1 x_1\cdots x_n\otimes\cdots\otimes y_1 x_1\cdots x_n\otimes y_1\cdots y_nx_1\nonumber\\&=\pm\omega_1 x_2\cdots x_n\otimes\cdots\otimes \omega_1 x_2\cdots x_n \otimes \omega_1 y_2\cdots y_n.\label{segundosumando}
\end{align}
Consequently, the image in $B_g$ of the product under consideration is the sum of the term in~(\ref{primersumando}) and the term in~(\ref{segundosumando}), which is non-zero by Theorem~\ref{indlin}.
\end{proof}

\section{Proof of Theorem~\ref{indlin}}
In view of the particularly simple presentation~(\ref{ecopre}) of $B_g$, it might be tempting to guess the form of an additive basis for $B_g$ which, in addition, could easily imply Theorem~\ref{indlin}. However, a few unexpected relations holding in $B_g$ are hidden in $\mathcal{J}_g$. It is the purpose of this section to uncover, in the most efficient way (for the purpose of proving Theorem~\ref{indlin}), some of these unexpected relations.

\medskip
Recall the additive basis $\beta'_2$ of $A_g$ in Corollary~\ref{baseprimamodificada}, that is, the set of products $v_1\cdots v_n$ satisfying the two conditions:
\begin{enumerate}[(i)]
\item For each $i\in\{1,\ldots,n\}$, the factor $v_i$ belongs to $\{1,x_i(p), y_i(p), \omega_i\colon 1\leq p\leq g\}$.
\item At most one of $v_1,\ldots, v_n$ belongs to $\{x_i(p),y_i(p),\omega_i\colon 1\leq i\leq n\mbox{ and }\,2\leq p\leq g\}$.
\end{enumerate}

The verification of the following two lemmas is a straightforward and, thus, omitted task.

\begin{lema}\label{calculosdirectos1}
Let $2\leq j\leq n$. For $v_1\cdots v_n\in\beta'_2$, the product $v_1\cdots v_n\cdot x_jy_j$ vanishes in $A_g$ provided either one of the following conditions holds:
\begin{enumerate}[(i)]
\item $v_j\in\{x_j(p),y_j(p),\omega_j\,\colon1\leq p\leq g\}$.
\item $v_1\in\{x_1(p),y_1(p),\omega_1\,\colon2\leq p\leq g\}$.
\item $v_1\in\{x_1,y_1\}\,\mbox{ and }\,v_k\in\{x_k(p),y_k(p),\omega_k\,\colon2\leq p\leq g\}$ for some $k\not\in\{1,j\}$.
\end{enumerate}
Furthermore, the following relations hold in $A_g$:
\begin{enumerate}[(i)]\addtocounter{enumi}{3}
\item $x_1\cdot x_jy_j=x_1\omega_j+\omega_1x_j$.
\item $y_1\cdot x_jy_j=y_1\omega_j+\omega_1y_j$.
\item $z_k\cdot x_jy_j=z_ky_1x_j-z_kx_1y_j$, \ for $z_k\in\{x_k(p),y_k(p),\omega_k\colon2\leq p\leq g\}$ with $k\not\in\{1,j\}$.
\end{enumerate}
\end{lema}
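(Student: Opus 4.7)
My plan is to treat all six parts uniformly by expanding, for $j\geq 2$,
\[
x_jy_j=(a_j-a_1)(b_j-b_1)=\omega_j+\omega_1-a_jb_1-a_1b_j
\]
in $H^*(\Sigma_g^{\times n})$, multiplying by the factor $v_1\cdots v_n$, and reducing modulo the $A_g$-relations $a_i(p)a_j(q)=a_i(p)b_j(q)=b_i(p)b_j(q)=0$ (for $p,q\in\{2,\ldots,g\}$ and $i\neq j$) together with the intrinsic identities of $H^*(\Sigma_g)$: $a_i(p)a_i(q)=b_i(p)b_i(q)=0$, $a_i(p)b_i(q)=0$ for $p\neq q$, and $a_i(p)b_i(p)=\omega_i$. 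A preliminary identity I would establish up front is $\omega_1\omega_k=0$ in $A_g$ for every $k\neq 1$, since $\omega_1\omega_k=a_1(2)b_1(2)a_k(2)b_k(2)=\pm a_1(2)a_k(2)b_1(2)b_k(2)$ and $a_1(2)a_k(2)=0$. More generally, any monomial already carrying a high-index factor at some position $\ell$ annihilates $\omega_\ell$ by the same mechanism.

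For the three explicit formulas (iv), (v), (vi) this reduces to a direct expansion. In (iv), for instance, the four terms in $a_1\cdot(\omega_j+\omega_1-a_jb_1-a_1b_j)$ collapse because $a_1^2=0$ kills the last two, while the first two combine, after anticommuting $a_1$ past $a_j$, into $\omega_ja_1+a_j\omega_1$; rewriting $a_j=x_j+x_1$ and using $x_1\omega_1=0$ yields exactly $x_1\omega_j+\omega_1x_j$. Item (v) is the mirror computation with $b_1$ in place of $a_1$. For (vi) the preliminary kills the $\omega_j$ and $\omega_1$ summands of $z_k\cdot x_jy_j$ outright, leaving only the mixed terms, which after the substitution $a_j=x_j+x_1$, $b_j=y_j+y_1$ collapse to $z_ky_1x_j-z_kx_1y_j$.

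For the three vanishing items the same four-term expansion is inspected term by term. In (i), a nontrivial $v_j$ either squares a generator at position $j$ or pushes its degree to $3$ when met with $\omega_j$, and its interaction with $a_jb_1$, $a_1b_j$ reduces to $a_j(p)b_j(q)=0$ with $p\neq q$ at position $j$; the residual $\omega_1$ contribution is eliminated by a direct $v_j\omega_1=0$ computation modelled on the preliminary. In (ii), a high-index $v_1$ annihilates each of the four summands through the $A_g$-relations at position $1$ combined with $a(p)b(q)=0$ for $p\neq q$. The most delicate case is (iii): there $v_k$ kills the $\omega_j$ and $\omega_1$ parts by the preliminary, and $v_1\in\{a_1,b_1\}$ kills the mixed parts $a_jb_1$ and $a_1b_j$ after anticommutation, using $a_1^2=b_1^2=0$ together with a further invocation of $v_k\omega_1=0$ to dispose of the residual cross terms. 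The only real obstacle is the sign bookkeeping, which is precisely why the verification is labelled straightforward.
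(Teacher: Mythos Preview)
Your argument is correct and is exactly the kind of direct expansion-and-reduction the paper has in mind; the paper itself omits the proof as a ``straightforward'' verification, and your computation via $x_jy_j=\omega_j+\omega_1-a_jb_1-a_1b_j$ together with the $A_g$-relations and the auxiliary vanishing $v\,\omega_\ell=0$ (for $v$ a high-index factor at some position $\neq\ell$) is precisely that. One small wording slip in your treatment of (iv): it is the second and fourth of the four terms that vanish (via $a_1\omega_1=0$ and $a_1^2=0$), while the first and third survive and combine---after anticommuting $a_1$ past $a_j$---into $\omega_ja_1+a_j\omega_1$, as your next clause in fact shows.
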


\begin{lema}\label{calculosdirectos2}
Let $i,j\in\{2,\ldots,n\}$ with $i\neq j$. Then, in $A_g$:
\begin{enumerate}[(1)]
\item The only non-trivial products $z_i\cdot x_iy_j$ with $z_i\in\{x_i(p),y_i(p),\omega_i\colon 1\leq p\leq g\}$ are
\begin{enumerate}[(i)]
\item $y_i\cdot x_iy_j=-\omega_i y_j+\omega_1y_j-y_1x_iy_j+x_1y_iy_j$.
\item $z_i\cdot x_iy_j=-z_ix_1y_j$, \ for $z_i\in\{x_i(p),y_i(p),\omega_i\colon 2\leq p\leq g\}$.
\end{enumerate}
\item The only non-trivial products $z_j\cdot x_iy_j$ with $z_j\in\{x_j(p),y_j(p),\omega_j\colon 1\leq p\leq g\}$ are
\begin{enumerate}[(i)]\addtocounter{enumii}{2}
\item $x_j\cdot x_iy_j=-x_i\omega_j+x_i\omega_1+y_1x_ix_j-x_1x_iy_j$.
\item $z_j\cdot x_iy_j=-z_jx_iy_1$, \ for $z_j\in\{x_j(p),y_j(p),\omega_j\colon 2\leq p\leq g\}$.
\end{enumerate}
\item The only non-trivial product $z_iz_j\cdot x_iy_j$ with $z_i$ and $z_j$ as in~(1) and~(2) above is
\begin{enumerate}[(i)]\addtocounter{enumii}{4}
\item $y_ix_j\cdot x_iy_j=y_1\omega_ix_j+y_1x_i\omega_j-x_1\omega_iy_j-x_1y_i\omega_j+\omega_1y_ix_j-\omega_1x_iy_j$.
\end{enumerate}
\item The only non-trivial products $z_1z_i\cdot x_iy_j$ with $z_1\in\{x_1(p),y_1(p),\omega_1\colon 1\leq g\leq p\}$ and $z_i$ as in~(1) above are
\begin{enumerate}[(i)]\addtocounter{enumii}{5}
\item $x_1y_i\cdot x_iy_j=-x_1\omega_iy_j-\omega_1x_iy_j$.
\item $y_1y_i\cdot x_iy_j=-y_1\omega_iy_j-\omega_1y_iy_j$.
\end{enumerate}
\item The only non-trivial products $z_1z_j\cdot x_iy_j$ with $z_j$ and $z_1$ as in~(2) and~(4) above are
\begin{enumerate}[(i)]\addtocounter{enumii}{7}
\item $x_1x_j\cdot x_iy_j=-x_1x_i\omega_j+\omega_1x_ix_j$.
\item $y_1x_j\cdot x_iy_j=-y_1x_i\omega_j+\omega_1x_iy_j$.
\end{enumerate}
\item All products $z_1z_iz_j\cdot x_iy_j$ with $z_i$, $z_j$ and $z_1$ as in~(1),~(2) and~(4) vanish.
\end{enumerate}
\end{lema}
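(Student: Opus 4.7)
The proof is a systematic expansion-and-reduce procedure built on four ingredients: (a)~the defining relations~(\ref{relsporeje}) of $H^*(\Sigma_g^{\times n})$; (b)~graded commutativity, so that any two degree-$1$ classes anticommute; (c)~the $A_g$ relations $a_i(p)a_j(q)=a_i(p)b_j(q)=b_i(p)b_j(q)=0$ for $p,q\geq 2$ and $i\neq j$; and (d)~the substitutions $a_i=x_i+x_1$, $b_i=y_i+y_1$ for $i\geq 2$, together with $x_1=a_1$ and $y_1=b_1$. The key pre-computation is
\[
x_iy_j\,=\,(a_i-a_1)(b_j-b_1)\,=\,a_ib_j-a_ib_1-a_1b_j+\omega_1,
\]
which turns every product $z\cdot x_iy_j$ into an explicit element of $H^*(\Sigma_g^{\times n})$ that can be manipulated by~(a)--(b) and then reduced modulo~(c).

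To handle the vanishing companions of~(1)--(2), I would first note the instant vanishings $x_i^2=(a_i-a_1)^2=0$ and $y_j^2=(b_j-b_1)^2=0$. For identities~(i) and~(ii): the crucial observation is that $z_ia_i=0$ holds in $H^*(\Sigma_g^{\times n})$ whenever $z_i\in\{x_i(p),y_i(p),\omega_i\colon p\geq 2\}$, because $a_i(p)a_i(q)=0$ and $a_i(p)b_i(q)=\delta_{pq}\omega_i$. This collapses $z_i\cdot x_iy_j=z_i(a_i-a_1)(b_j-b_1)$ to $-z_ia_1(b_j-b_1)=-z_ix_1y_j$, which is~(ii); the remaining case $z_i=y_i$ yielding~(i) is then a one-liner: $y_i\cdot x_iy_j=-x_iy_i\cdot y_j=-(\omega_i-\omega_1+y_1x_i-x_1y_i)\,y_j$ by~(\ref{unanueva}). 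The identities~(iii)--(iv) are dual: pull $z_j$ past $x_i$ (picking up a sign), reduce $x_jy_j$ via~(\ref{unanueva}), and discard forbidden products; the unlisted cases vanish because $x_j^2=y_j^2=0$ or because $z_jb_j=0$ when $z_j$ has second index $\geq 2$.

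For the composite items~(v)--(ix), my plan is to iterate these two reductions, exploiting in addition the crucial fact $\omega_k\omega_l=0$ in $A_g$ whenever $k\neq l$ (obtained by writing both factors as $a_k(2)b_k(2)$ and $a_l(2)b_l(2)$, then annihilating $b_k(2)a_l(2)$ via~(c)), together with $\omega_1\cdot x_1=\omega_1\cdot y_1=\omega_1^2=0$, which follow already in $H^*(\Sigma_g^{\times n})$ from $a_1^2=b_1^2=0$. Concretely,~(v)~is obtained by applying $y_i$ to~(iii), expanding via $y_ix_i=-(\omega_i-\omega_1+y_1x_i-x_1y_i)$, and simplifying; the six surviving monomials match the stated expression. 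Items~(vi)--(ix) arise analogously by multiplying an earlier identity by the relevant $z_1$. The vanishing~(6) is the cleanest: every term surviving in the reduction of $y_ix_j\cdot x_iy_j$ already carries a factor in $\{x_1,y_1,\omega_1\}$, so multiplication by any $z_1\in\{x_1(p),y_1(p),\omega_1\colon 1\leq p\leq g\}$ produces an $a_1^2$, a $b_1^2$, a product $\omega_1\omega_k$ with $k\geq 2$, or a forbidden $A_g$-monomial, and hence vanishes.

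The main obstacle is not conceptual but notational: bookkeeping the Koszul signs that accumulate as strings of degree-$1$ classes are reshuffled in parallel with the substitution~(\ref{unanueva}). To keep this tractable I would fix a total ordering on the generators (say $\omega_1<x_1<y_1<\omega_i<x_i<y_i<\omega_j<x_j<y_j$, with the $p\geq 2$ generators appended at the end) and rewrite every intermediate monomial in that order, so that the two sides of each identity can be compared as sums of monomials of a common multidegree in a single final step.
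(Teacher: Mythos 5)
Your proposal is correct and is essentially the paper's (omitted) argument: the authors declare this lemma a straightforward verification, and your expansion-and-reduce scheme---substituting $x_i=a_i-a_1$, $y_i=b_i-b_1$, using~(\ref{relsporeje}), graded commutativity, the $A_g$-relations~(\ref{tomalaforma}), identity~(\ref{unanueva}), and the auxiliary facts $\omega_k\omega_l=0$ in $A_g$ ($k\neq l$, $g\geq2$) and $\omega_1x_1=\omega_1y_1=\omega_1^2=0$---is exactly that computation, and I checked that identities (i)--(ix) and the stated vanishings, including signs, come out as you describe. The only point worth a word is that the ``only non-trivial'' claims also require observing that each displayed right-hand side is a $\pm1$-combination of distinct elements of the basis $\beta'_2$ of Corollary~\ref{baseprimamodificada}, hence nonzero.
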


Set $\gamma_2=\beta'_2-\gamma_1$, where $\gamma_1\subseteq\beta_2'$ consists of the products $v_1\cdots v_n$ satisfying either one of the following two conditions:
\begin{enumerate}[(i)]\addtocounter{enumi}{2}
\item There is a unique $i\in\{1,\cdots,n\}$ for which $v_i=\omega_i$ and $v_j=x_j$ for $j\neq i$.
\item There is a unique $i\in\{1,\cdots,n\}$ for which $v_i=\omega_i$ and $v_j=y_j$ for $j\neq i$.
\end{enumerate}
There is an obvious additive splitting $A_g=C_{g,1}\oplus C_{g,2}$, where $C_{g,\epsilon}$ is the additive span of $\gamma_\epsilon$ ($\epsilon=1,2$). The final technical task in this paper, the proof of Theorem~\ref{indlin}, will be accomplished below by arguing first that the ideal $\mathcal{J}_g$ defining $B_g$ preserves the above splitting, i.e.~by giving an additive decomposition
\begin{equation}\label{artdeco}
\mathcal{J}_g=\mathcal{J}_{g,1}\oplus \mathcal{J}_{g,2},
\end{equation}
where $\mathcal{J}_{g,\epsilon}$ is a vector subspace of $C_{g,\epsilon}$ ($\epsilon=1,2$), and then by giving a description of the (additive structure of the) quotient $C_{g,1}/\mathcal{J}_{g,1}$, for which a basis will clearly be given by the two elements in the statement of Theorem~\ref{indlin}.

\medskip
In what follows, an element $\mathbf{v}=v_1\cdots v_n\in\beta'_2$, will be denoted as
\begin{itemize}
\item $\mathbf{v}(0)$ to indicate that $v_k\in\{1,x_k,y_k\}$ for all $k=1,\ldots,n$; 
\item $\mathbf{v}(i_1,\ldots,i_t)$, for $i_1,\ldots,i_t\in\{1,\ldots,n\}$, to indicate that $v_{i_k}=1$ for $k\in{1,\ldots,t}$. 
\end{itemize}
These two conventions will also be combined. For instance, by writing $\mathbf{v}(0,1,j)$ we mean that the element $\mathbf{v}\in\beta'_2$ satisfies $v_k\in\{1,x_k,y_k\}$ for all $k=1,\ldots,n$, as well as $v_1=v_j=1$.

\begin{proof}[Proof of Theorem~\ref{indlin}]
A set of additive generators of $\mathcal{J}_g$ is given by the products  $\mathbf{v}\cdot r$ with $\mathbf{v}=v_1\cdots v_n\in\beta'_2$ and $r\in\{x_iy_j\,\colon i,j\in\{2,\ldots,n\}\}$. The additive decomposition~(\ref{artdeco}) will follow once we check that
\begin{equation}\label{laasevn}\mbox{\begin{minipage}{13.1cm}
{\em the expression of each such product $\mathbf{v}\cdot r=v_1\cdots v_n \cdot x_iy_j$ (in terms of the basis $\beta'_2$) involves either only elements of $\gamma_1$ or, else, only elements of $\gamma_2$.}
\end{minipage}}\end{equation}
{\bf Case $i=j\geq2$}. By Lemma~\ref{calculosdirectos1}(i), we only need to consider products $\mathbf{v}(j)\cdot x_jy_j$. Recalling from~(\ref{unanueva}) that $x_jy_j=\omega_j-\omega_1+y_1x_j-x_1y_j$, it is clear that (\ref{laasevn}) holds, with $\gamma_2$ being the relevant basis, if $\mathbf{v}=\mathbf{v}(1,j)$ ---in checking this type of assertions, the reader might find it convenient to consider first the case $\mathbf{v}=\mathbf{v}(0,1,j)$. Thus, by Lemma~\ref{calculosdirectos1}(ii) and~(iii), we can assume $v_1\in\{x_1,y_1\}$ and $\mathbf{v}=\mathbf{v}(0)$. In other words, it remains to consider products of the form 
$$
x_1\mathbf{v}(0,1,j)\cdot x_jy_j\quad\,\mbox{and }\quad y_1\mathbf{v}(0,1,j)\cdot x_jy_j.
$$
It is clear from Lemma~\ref{calculosdirectos1}(iv) and~(v) that~(\ref{laasevn}) holds true for the two types of products just described, and that the only such products whose expression in terms of the basis $\beta'_2$ involves only elements from $\gamma_1$ can actually be written, up to a sign, as
\begin{equation}\label{fin1}
\omega_1x_2\cdots x_n+(-1)^jx_1x_2\cdots x_{j-1}\omega_j x_{j+1}\cdots x_n
\end{equation}
and
\begin{equation}\label{fin2}
\omega_1y_2\cdots y_n+(-1)^jy_1y_2\cdots y_{j-1}\omega_j y_{j+1}\cdots y_n.
\end{equation}

\smallskip\noindent
{\bf Case $i,j\in\{2,\ldots,n\}$ with $i\neq j$}. It is obvious that~(\ref{laasevn}) holds, with $\gamma_2$ being the relevant basis, provided $\mathbf{v}=\mathbf{v}(i,j)$. The rest of the possibilities can be analyzed on a term-by-term basis, depending on the values of $z_i$ and $z_j$ in a product $z_iz_j\mathbf{v}(i,j)\cdot x_iy_j$, where $z_t\in\{1,x_t(p),y_t(p),\omega_t\colon 1\leq p\leq g\}$. Actually, by Lemma~\ref{calculosdirectos2}, the only factors involved in the expression of any $z_iz_j\cdot x_iy_j$ can come from the coordinates 1, $i$ and $j$. Therefore it is convenient to split the analysis by considering the products
\begin{equation}\label{2fnls}
z_iz_j\mathbf{v}(1,i,j)\cdot x_iy_j\quad\mbox{and}\quad z_1z_iz_j\mathbf{v}(1,i,j)\cdot x_iy_j.
\end{equation}
Lemma~\ref{calculosdirectos2} describes the expression of the corresponding factors $z_iz_j\cdot x_iy_j$ and $z_1z_iz_j\cdot x_iy_j$ in terms of the basis $\beta'_2$. In all such cases one checks, by direct inspection, that
\begin{itemize}
\item (\ref{laasevn}) holds true for all products in~(\ref{2fnls}),
\item the only products in~(\ref{2fnls}) whose expression in terms of $\beta'_2$ involves elements from $\gamma_1$ are those arising from instances~(vii) and~(viii) of Lemma~\ref{calculosdirectos2}, in which case
\item the resulting expressions in terms of the basis $\beta'_2$ coincide with those in~(\ref{fin1}) and ~(\ref{fin2}) ---note that signs in items~(vii) and~(viii) of Lemma~\ref{calculosdirectos2} are important here!
\end{itemize} 

The proof is complete since the above considerations imply that the decomposition~(\ref{artdeco}) holds in such a way that an additive basis for the resulting additive summand $C_{g,1}/\mathcal{J}_{g,1}$ of $B_g$ is given by the two elements in the statement of Theorem~\ref{indlin}.
\end{proof}

\section{The case $s=2$}
As mentioned in the introduction, the case $s=2$ in Theorem~\ref{main} reduces to Theorem~A in~\cite{CoFa11}. We have given full proof details for that case too because we believe that there are a couple of weak points and, most critically, at least one flawed argument in the homological part of Cohen-Farber's argument. This section describes such potential problems. The reader is assumed to be familiar with the notation in~\cite{CoFa11}. 

\medskip
The main problem happens at the end of the fourth paragraph of the proof of~\cite[Theorem~5.1]{CoFa11}, where the authors assert that the proof of the case for genus $g\geq2$ can be reduced to the consideration of the $g=2$ case by ``annihilating all generators of the form $1\times\cdots\times u\times\cdots\times1$ where $u\in\{a(q),b(q) \colon 3\leq q\leq g\}$''. (Note the typo ``$3\leq q\leq n$'' in~\cite{CoFa11}.) Such an argument does not work because if, for instance, we set $a(3)=0$ in the $i$-th axis, then $w=a(3)b(3)$ would also be zero in that axis. But this interferes  (for $i=1$) with Cohen and Farber's later calculation using the non-triviality of $\omega_1$ (see the last displayed formula in the proof of~\cite[Theorem~5.1]{CoFa11}).

\medskip
In addition, we believe that a weak argument arises at the end of the proof of~\cite[Theorem~5.1]{CoFa11}, where the authors assert that
\begin{equation}\label{notablyweak}\mbox{
\begin{minipage}{5.5in}
the non-zero term $\pm2\omega_1y_2y_3\cdots y_n\otimes\omega_1x_2x_3\cdots x_n$ arises in the expansion of the product $\bar{a}_1\bar{b}_1\bar{c}_1\bar{d}_1\prod_{j=2}^n\bar{x}_j\bar{y}_j$ in such a way that \emph{no other summand in the expansion involves this (non-zero) tensor product.}
\end{minipage}
}\end{equation}
The (apparently implicit) argument supporting~(\ref{notablyweak}) is based on two facts noted in earlier parts of Cohen-Farber's paper: 
\begin{itemize}
\item [(I)] On the one hand, as indicated at the end of the proof of~\cite[Theorem~4.1]{CoFa11} (i.e.~when dealing with the algebra $A_T$ in the genus-$1$ case), the expansion (in terms of basis elements) of $\prod_{j=1}^n\bar{x}_j\bar{y}_j$ uses (with coefficient $\pm1$) the basis element $y_1y_2y_3\cdots y_n\otimes x_1x_2x_3\cdots x_n$.
\item[(II)] On the other hand, near the bottom of page~656 of~\cite{CoFa11}, it is observed (without further explanation, though) that ``The subalgebra of $B_{\Sigma}$ generated by $\{a_i,b_i\colon 1\leq i\leq n\}$ is isomorphic to the subalgebra $A_T$ arising in the genus one case''.
\end{itemize}

The problem is that the latter two facts do not really support~(\ref{notablyweak}) for, although $A_T$ were a honest subalgebra of $B_{\Sigma}$, nothing is said about the (potential) injectivity of the obvious map $(2\omega_1\otimes\omega_1)\cdot A_T\to B_{\Sigma}$. In the Cohen-Farber approach, fixing these problems requires, in principle, an explicit description of additive bases for the subquotient algebras they deal with. Such a task tends to become combinatorially involved, specially in the case of Rudyak's higher TC. We have greatly simplify the job by working in a much smaller subquotient---small enough to detect just the minimal needed information.

\medskip
Also worth remarking is what appears to the authors of this paper to be a weak statement of item~(ii) in~\cite[Lemma~2.1]{CoFa11}, namely, the assertion that an epimorphic image $B$ of an algebra $A$ over a field has $\zcl(A)\geq\zcl(B)$. The verification of such a property is left as a ``straightforward exercise'' in~\cite{CoFa11} and, as in the case of the dual statement in item~(i), its proof should naturally start by picking zero-divisors $b_1,\ldots,b_t\in B\otimes B$ with $b_1\cdots b_t\neq0$. With these conditions it is certainly obvious that, for any choice of preimages $a_i\in A\otimes A$ of each $b_i$, the product $a_1\cdots a_n$ is forced to be non-zero. But the point is to make sure that each $a_i$ can be chosen to be a zero-divisor in $A$, which does not seem to be accomplishable in the stated generality. Nonethless, what can certainly be done (and has been done in this paper) is to argue the non-triviality of some given product of zero-divisors in $A\otimes A$ by exhibiting the non-triviality of the image of the product in $B\otimes B$.


\def\cprime{$'$}

\bigskip\sc
Departamento de Matem\'aticas

Centro de Investigaci\'on y de Estudios Avanzados del IPN

Av.~IPN 2508, Zacatenco, M\'exico City 07000, M\'exico

{\tt jesus@math.cinvestav.mx}

{\tt
bgutierrez@math.cinvestav.mx}

\end{document}